\theoremstyle{plain}
\newtheorem{theorem}{Theorem}[section]
\newtheorem{proposition}[theorem]{Proposition}
\theoremstyle{definition} 
\newtheorem*{definition*}{Definition}
\newtheorem*{remark*}{Remark}
\newtheorem{remark}{Remark}[section]
\newcommand{\R}{\mathbb{R}}
\def\p{\partial}
\def\n{\nabla}
\def\a{\alpha}
\def\b{\beta}
\def\d{\delta}
\def\g{\gamma}
\def\k{\kappa}
\def\g{\gamma}
\def\e{\epsilon}
\def\ve{\varepsilon}
\def\ab{\bar{\alpha}}
\def\bb{{\bar{\beta}}}
\def\db{\bar{\delta}}
\def\gb{\bar{\gamma}}
\def\abb{\alpha \bar{\beta}}
\def\Ric{\text{Ric}}
\def\Ric{\operatorname{Ric}}
\numberwithin{equation}{section}
\begin{document}

\title[Matrix Harnack Estimates under K\"ahler-Ricci Flow]{Matrix Li-Yau-Hamilton Estimates under K\"ahler-Ricci Flow}

\author[X. Li]{Xiaolong Li}\thanks{The first author's research is partially supported by Simons Collaboration Grant \#962228 and a start-up grant at Wichita State University}
\address{Department of Mathematics, Statistics and Physics, Wichita State University, Wichita, KS, 67260, USA}
\email{xiaolong.li@wichita.edu}

\author[H.Y. Liu]{Hao-Yue Liu}\thanks{}
\address{Department of Mathematics, China University of Mining and Technology, Xuzhou, 221116, China}
\email{lhyty7@163.com}

\author[X.A. Ren]{Xin-an Ren}\thanks{The third author's research is partially supported by National Natural Science Foundation of China Grant \#11571361}
\address{Department of Mathematics, China University of Mining and Technology, Xuzhou, 221116, China}
\email{renx@cumt.edu.cn}

%\author[Zhang]{Qi S. Zhang}\thanks{The fourth author's research is partially supported by Simons Collaboration Grant \#710364.}
%\address{Department of Mathematics, University of California, Riverside, Riverside, CA, 92521, USA}
%\email{qizhang@math.ucr.edu}

\subjclass[2020]{53E30 (Primary), 58J35, 35K05 (Secondary)}

\keywords{Li-Yau-Hamilton estimates, matrix Harnack inequality, heat equation, K\"ahler-Ricci flow}

\begin{abstract}
We prove matrix Li-Yau-Hamilton estimates for positive solutions to the heat equation and the backward conjugate heat equation, both coupled with the K\"ahler-Ricci flow. As an application, we obtain a monotonicity formula.
\end{abstract}

\maketitle

\section{Introduction}

In a fundamental work \cite{LY86}, Li and Yau proved gradient estimates for positive solutions to the heat equation on a Riemannian manifold and derived a sharp version of the classical Harnack inequality of Moser \cite{Moser64} from their estimates. Under stronger curvature assumptions, Hamilton \cite{Hamilton93} extended the estimates of Li and Yau to full matrix estimates on the Hessian. Later on, Chow and Hamilton \cite{CH97} further extended these estimates to the constrained setting and also proved new linear trace Harnack inequalities for the Ricci flow. Analogous estimates were obtained by Cao and Ni \cite{CN05} on K\"ahler manifolds, by Ni \cite{Ni07} under the K\"ahler-Ricci flow, by Yao, Shen, Zhang, and the third author \cite{RYSZ15} in the constrained case. Extensions to a nonlinear heat equation were obtained by X. Cao, Fayyazuddin Ljungberg, and Liu \cite{CFLL13}, Wu \cite{Wu20}, and the third author \cite{Ren19}. 
For more discussions on Li-Yau-Hamilton estimates, we refer the reader to the surveys \cite{Ni08Survey} and \cite{Chow22}, and the monographs \cite[Chapters 15-16]{Chowbookpart2} and \cite[Chapters 23-26]{Chowbookpart3}.

Recently, Zhang and the first author \cite{LZ23} proved new matrix Li-Yau-Hamilton estimates for positive solutions to the heat equation and the backward conjugate heat equation under the Ricci flow. They used these estimates to study the monotonicity of various parabolic frequencies. The purpose of this paper is to prove analogous results under the $\e$-K\"ahler-Ricci flow
\begin{equation}\label{eq eKRF}
\frac{\p}{\p t} g_{\a \bb} =-\epsilon R_{\a \bb}.
\end{equation} 
Here and throughout this paper, $\e$ is a positive constant, $g_{\abb}(t)$ is a family of K\"ahler metrics on a complex manifold $M^m$ of complex dimension $m\geq 1$, and $R_{\abb}$ denotes the Ricci curvature. We will prove matrix Li-Yau-Hamilton estimates for positive solutions to the heat equation 
\begin{equation}\label{eq heat equation}
u_t-\Delta_{g(t)} u=0
\end{equation} 
and the backward conjugate heat equation
\begin{equation}\label{eq backward conjugate heat equation}
u_t+\Delta_{g(t)} u =\epsilon R u,
\end{equation}
where $R$ denotes the scalar curvature, both coupled with the $\e$-K\"ahler-Ricci flow. 
Both \eqref{eq heat equation} and \eqref{eq backward conjugate heat equation} are of fundamental importance in the study of Ricci flow; see Perelman \cite{Perelman1}, Zhang \cite{Zhang06}, Cao and Hamilton \cite{CH09}, Cao and Zhang \cite{CZ11}, and Bamler \cite{Bamler20}.

Our first result states 

\begin{theorem}\label{thm LYH heat}
Let $(M^m,g(t))$, $t\in [0,T]$, be a complete solution to the $\epsilon$-K\"ahler-Ricci flow \eqref{eq eKRF} with nonnegative bisectional curvature and $R_{\a \bb} \leq \k g_{\a \bb}$ for some $\kappa >0$. %, where $R_{\abb}$ denotes the Ricci curvature. 
Let $u:M^m \times [0,T] \to \R$ be a positive solution to the heat equation \eqref{eq heat equation}.
Then
\begin{equation}\label{eq matrix LYH heat}
\n_\a \n_\bb \log u + \frac{\epsilon \k}{1-e^{-\epsilon \k t}} g_{\a \bb} \geq 0 
\end{equation} for all $(x,t)\in M\times (0,T)$. 
\end{theorem}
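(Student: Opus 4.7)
The plan is to set $f=\log u$, so that the heat equation becomes
$$f_t = \D f + |\n f|^2,$$
and to study the Hermitian $(1,1)$ tensor
$$P_{\abb} := \n_\a \n_\bb f + \phi(t)\,g_{\abb},\qquad \phi(t) := \frac{\e\k}{1-e^{-\e\k t}}.$$
A direct differentiation shows that $\phi$ satisfies the logistic ODE
$$\phi'(t) = \e\k\,\phi(t) - \phi(t)^2,$$
which is precisely what the algebra below will demand; moreover $\phi(t)\sim 1/t$ as $t\to 0^+$, so the standard short-time smoothing of the heat equation forces $P_{\abb}>0$ for all sufficiently small $t>0$. Theorem~\ref{thm LYH heat} is therefore equivalent to propagating $P_{\abb}\ge 0$ on $(0,T)$.

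\textbf{Evolution equation for $P$.} Because $\n_\a\n_\bb f=\p_\a\p_\bb f$ on a K\"ahler manifold, $\p_t \n_\a\n_\bb f=\p_\a\p_\bb(\D f+|\n f|^2)$. I would expand this using (i) the K\"ahler Bochner identity relating $\n_\a\n_\bb \D f$ to $\D(\n_\a\n_\bb f)$, which produces a bisectional contraction $R_{\abb\g\db} f^{\g\db}$ together with a Ricci commutator term; (ii) the formula $\p_\a\p_\bb|\n f|^2 = f_{\a\gb}f^\g{}_\bb + \n^\g f\,\n_\g f_{\abb} + \n_\g f\,\n^\g f_{\abb}$ modulo further curvature corrections; and (iii) the time-derivative identity $\p_t(\phi\,g_{\abb})=\phi'\,g_{\abb}-\e\,\phi\,R_{\abb}$. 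Substituting $\n_\a\n_\bb f=P_{\abb}-\phi\,g_{\abb}$ throughout and collecting terms, the Ricci contractions produced by (i) should cancel against those generated when $f = P - \phi g$ is substituted into $R_{\abb\g\db}f^{\g\db}$, yielding an evolution of the form
\begin{align*}
(\p_t-\D)P_{\abb} &= \n^\g f\,\n_\g P_{\abb} + \n_\g f\,\n^\g P_{\abb} + P_{\a\gb}\,P^\g{}_\bb - 2\phi\,P_{\abb} + R_{\abb\g\db}\,P^{\g\db} \\
&\quad + \bigl(\phi'+\phi^2-\e\k\phi\bigr)\,g_{\abb} + \e\phi\,\bigl(\k g_{\abb}-R_{\abb}\bigr).
\end{align*}
The second-line parenthesis vanishes identically by the ODE for $\phi$, and the last term is a nonnegative Hermitian matrix under the hypothesis $R_{\abb}\le \k g_{\abb}$.

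\textbf{Tensor maximum principle, completeness, and the main obstacle.} Apply Hamilton's tensor maximum principle: at a hypothetical first $(x_0,t_0)\in M\times(0,T)$ where $P$ acquires a zero eigenvalue in a direction $v$, work in a unitary frame at $(x_0,t_0)$ that diagonalizes $P$ with $v=e_1$. Then $P_{1\bar{1}}=0$, $\n P(v,\bar{v})=0$ (so the transport terms drop out), $P_{1\gb}P^\g{}_{\bar{1}}=0$, $2\phi\,P_{1\bar{1}}=0$, and $R_{1\bar{1}\g\db}P^{\g\db}\ge 0$ by the nonnegative bisectional curvature assumption applied to the nonnegative Hermitian form $P$; combined with the two already-nonnegative zeroth-order terms, this yields $(\p_t-\D)P_{1\bar{1}}(x_0,t_0)\ge 0$, contradicting the first-zero hypothesis. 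For the noncompact setting I would first subtract a small $\d g_{\abb}$ and localize via a standard Shi-type spatial cutoff together with a time-barrier perturbation, forcing the infimum of the smallest eigenvalue of $P$ to be attained, then send the auxiliary parameters to zero. The principal obstacle is the meticulous bookkeeping in the evolution step: K\"ahler curvature commutators produce both bisectional and Ricci contractions of $f_{\g\db}$, and after re-expressing everything in $P$ these must cancel cleanly so that only the well-signed pieces $R_{\abb\g\db}P^{\g\db}$ and $\e\phi(\k g-R)$ remain, at which point the ODE for $\phi$ and the upper Ricci bound finish the argument.
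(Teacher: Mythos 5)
Your proposal follows exactly the same strategy as the paper: set $v=\log u$, form the barrier tensor $S_{\abb}=v_{\abb}+c(t)g_{\abb}$ with $c(t)=\frac{\e\k}{1-e^{-\e\k t}}$ (your $P$ and $\phi$), verify $c'=-c^2+\e\k c$, derive an evolution inequality, and close by the tensor maximum principle (Hamilton in the compact case, Ni's version after localization in the noncompact case). The single structural difference is that the paper writes its evolution equation against the complex Lichnerowicz Laplacian $\Delta_L$, which already absorbs the bisectional contraction and Ricci commutator terms, while you work against the rough Laplacian $\Delta$, so those terms appear explicitly on the right.

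That choice is fine, but as you anticipated in your "principal obstacle" paragraph, the displayed evolution equation is not what one actually gets, and the cancellations you conjecture do not occur. Concretely, when the paper's identity
\[
\n_\a\n_\bb|\n v|^2 = v_{\a\gb}v_{\g\bb}+v_{\a\g}v_{\bb\gb}+R_{\a\gb\d\bb}\n_\g v\,\n_{\db}v+\n_\g v_{\abb}\,\n_{\gb}v+\n_{\gb}v_{\abb}\,\n_\g v
\]
is combined with $\n_\a\n_\bb\Delta v=\Delta_L v_{\abb}$ and the substitution $v_{\abb}=P_{\abb}-\phi g_{\abb}$, one finds that the right-hand side of $(\p_t-\Delta)P_{\abb}$ contains, beyond what you wrote, the three extra groups of terms
\[
v_{\a\g}v_{\bb\gb},\qquad R_{\a\gb\d\bb}\n_\g v\,\n_{\db}v,\qquad -\tfrac{1}{2}R_{\a\gb}P_{\g\bb}-\tfrac{1}{2}R_{\g\bb}P_{\a\gb}.
\]
The first is a nonnegative Hermitian form (the $(2,0)\times(0,2)$ Hessian product); the second is precisely where the paper uses nonnegative bisectional curvature, and is nonnegative under the hypothesis; the third is the Lichnerowicz Ricci-commutator piece, which does \emph{not} cancel as you suggested, but which vanishes at a null eigenvector of $P$ when $P\ge 0$ (since in a diagonalizing frame $P_{1\gb}=P_{\g\bar 1}=0$). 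Your term $R_{\abb\g\db}P^{\g\db}$ is the $\Delta_L$-correction, which is also $\ge 0$ at a null eigenvector for $P\ge 0$. So every omitted term is either nonnegative or vanishes where it matters, and your maximum-principle conclusion stands; the evolution equation as stated just should not be presented as an identity. The paper's $\Delta_L$ formulation is the cleaner way to organize the same cancellations. One further small caution: the "first-zero contradiction" phrasing only yields $(\p_t-\Delta)P_{1\bar1}\le 0$ against $\ge 0$, not a strict contradiction; one needs a strict barrier perturbation or a direct appeal to Hamilton's theorem, as the paper does.
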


\begin{remark}
Letting $\e \to 0^+$ in Theorem \ref{thm LYH heat} covers the result of Cao and Ni \cite[Theorem 1.1]{CN05}, which asserts that if $u$ is a positive solution to the heat equation on a complete K\"ahler manifold with bounded nonnegative bisectional curvature, then 
\begin{equation}\label{eq Cao-Ni matrix estimate}
    \n_\a \n_{\bar{\b}} \log u +\frac{1}{t}g_{\a\bar{\b}} \geq 0. 
\end{equation}
\end{remark}

\begin{remark}
Cao and Ni \cite[Theorem 3.1]{CN05} proved that if $u$ is a positive solution to the heat equation coupled with a complete K\"ahler-Ricci flow with bounded nonnegative bisectional curvature, then \eqref{eq Cao-Ni matrix estimate} holds provided that $u$ is plurisubharmonic. In contrast to \eqref{eq matrix LYH heat}, their inequality does not depend on the upper bound of Ricci curvature but requires the plurisubharmonicity of $u$. 
The question of whether the plurisubharmonicity of $u$ is preserved by the heat equation, with either static or time-dependent metrics, was investigated by Ni and Tam \cite{NT03AJM, NT03JDG, NT04}. 
\end{remark}

\begin{remark}
Using the method in \cite{Hamilton93} or \cite{LZ23}, a similar result with error terms can be proved for $\epsilon$-K\"ahler-Ricci flows with bisectional curvature bounded from below. 
\end{remark}

%Tracing \eqref{eq matrix LYH heat}, we get
%\begin{corollary}
%Under the same assumptions as in Theorem \ref{thm LYH heat}, we have
%\begin{equation*}
%\frac{u_t}{u}-\frac{|\n u|^2}{u^2} + \frac{\epsilon \k m}{1-e^{-\epsilon \k t}} =\Delta \log u + \frac{\epsilon \k m}{1-e^{-\epsilon \k t}}\geq 0. 
%\end{equation*}
%\end{corollary}

%We show that the trace estimate holds under weaker assumptions.
%\begin{proposition}\label{prop trace LYH}
%Let $(M^m,g(t))$, $t\in [0,T]$, be a complete solution to the $\epsilon$-K\"ahler-Ricci flow \eqref{eq eKRF} with $R_{\a \bb} \geq 0$ and $R \leq m\k$ for some $\kappa >0$. %, where $R$ denotes the scalar curvature. 
%Let $u:M^m \times [0,T] \to \R$ be a positive solution to the heat equation \eqref{eq heat equation}. Then
%\begin{equation}%\label{eq matrix LYH heat}
%\frac{u_t}{u}-\frac{|\n u|^2}{u^2} + \frac{\epsilon \k m}{1-e^{-\epsilon \k t}} =\Delta \log u + %\frac{\epsilon \k m}{1-e^{-\epsilon \k t}}\geq 0 
%\end{equation} for all $(x,t)\in M\times (0,T)$. 
%\end{proposition}

Chow and Ni \cite[Theorem 2.2]{Ni07} proved that if $u$ is a positive solution to the forward conjugate heat equation $$u_t-\Delta_{g(t)} u=\e R u,$$ coupled with an $\e$-K\"ahler-Ricci flow with bounded nonnegative bisectional curvature, then 
\begin{equation}\label{eq Ni matrix estimate}
\e R_{\abb} +\n_\a \n_\bb \log u +\frac{1}{t}g_{\abb} \geq 0.
\end{equation}
%The equality holds if and only if $(M^m,g(t))$ is an expanding soliton of the $\e$-K\"ahler-Ricci flow.
Our second result provides an analogous result for the backward conjugate heat equation. 

\begin{theorem}\label{thm LYH backward}
Let $(M^m,g(t))$, $t\in [0,T]$, be a complete solution to the $\epsilon$-K\"ahler-Ricci flow \eqref{eq eKRF} with nonnegative bisectional curvature and $R_{\abb} \leq \k g_{\abb}$ for some $\k>0$.
Let $u:M^m \times [0,T] \to \R$ be a positive solution to the backward conjugate heat equation \eqref{eq backward conjugate heat equation}. 
Suppose that $\eta:(0,T) \to (0,\infty)$ is a $C^1$ function satisfying the ordinary differential inequality 
\begin{equation}\label{eq eta(t) ODE}
    \eta' \leq \eta^2 -\e \kappa \eta -\frac{\e \kappa}{t} 
\end{equation}
on $(0,T)$ and that $\eta(t) \to \infty$ as $t\to T$. 
Then
\begin{equation}\label{eq matrix LYH backward}
\e R_{\a \bb} -\n_\a\n_\bb \log u -\eta g_{\a \bb} \leq 0
\end{equation}
for all $(x,t) \in M \times (0,T)$. In particular, we have
\begin{equation}\label{eq LYH backward explicit eta}
\e R_{\a \bb} -\n_\a\n_\bb \log u -\left(\frac{\e \k}{1-e^{-\e \k (T-t)}}+\sqrt{\frac{\k}{t}} \right) g_{\a \bb} \leq 0.
\end{equation}
%for all $(x,t) \in M \times (0,T)$. 
\end{theorem}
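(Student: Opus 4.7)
Setting $f = -\log u$, \eqref{eq matrix LYH backward} is equivalent to the Hermitian $(1,1)$-form
\[
H_{\a\bb} := \e R_{\a\bb} + \n_\a\n_\bb f - \eta\, g_{\a\bb}
\]
being $\leq 0$ on $M \times (0,T)$. The plan is Hamilton's tensor maximum principle, propagated \emph{backward} in time from $t = T$. The terminal condition is immediate: $\eta \to \infty$ as $t \to T^-$ while $\e R_{\a\bb}$ and $\n_\a\n_\bb f$ are locally bounded, so $H_{\a\bb} \leq 0$ strictly for $t$ close to $T$; the noncompactness of $M$ is handled by a standard spatial cutoff.

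The main technical step is the evolution of $H_{\a\bb}$. From $u = e^{-f}$ in \eqref{eq backward conjugate heat equation}, one derives $\p_t f + \Delta f = |\n f|^2 - \e R$. Combined with $\p_t g_{\a\bb} = -\e R_{\a\bb}$ and the standard $\e$-KRF evolution
\[
\p_t R_{\a\bb} = \e \Delta R_{\a\bb} + \e R_{\a\bb\gamma\db} R^{\db\gamma} - \e R_{\a\db}R^{\db}{}_\bb,
\]
commuting covariant derivatives on the K\"ahler manifold produces an evolution of the schematic form
\[
(\p_t + \Delta) H_{\a\bb} = \e R_{\a\bb\gamma\db} H^{\db\gamma} + Q_{\a\bb} - \eta' g_{\a\bb},
\]
where $Q_{\a\bb}$ collects the quadratic Hessian terms $f_{\a\gb} f_{\gamma\bb}$ and Ricci cross-terms.

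Now argue by contradiction. If $H_{\a\bb} \not\leq 0$ somewhere in $M \times (0,T)$, then moving backward from $t = T$ there exist a first time $t_0 \in (0,T)$, a point $x_0$, and a unit null vector $V^\a$ with $H_{\a\bb}V^\a\bar V^\bb = 0$. Standard first-maximum considerations give $(\p_t + \Delta)(H_{\a\bb}V^\a\bar V^\bb)|_{(x_0,t_0)} \leq 0$. On the other hand, evaluating the tensor evolution on $V^\a\bar V^\bb$: the bisectional term $\e\sum_\gamma (R_{\a\bb\gamma\gb}V^\a\bar V^\bb)H_{\gamma\gb}$ is $\leq 0$ by nonnegative bisectional curvature and $H_{\gamma\gb} \leq 0$ at $t_0$; the null-direction identity $f_{\a\bb}V^\a\bar V^\bb = \eta - \e R_{\a\bb}V^\a\bar V^\bb$ together with $R_{\a\bb} \leq \k g_{\a\bb}$ extracts the Riccati piece $\eta^2 - \e\k\eta$ from the Hessian quadratic in $Q_{\a\bb}$; and the remaining Ricci/scalar cross-terms, controlled by $R_{\a\bb} \leq \k g_{\a\bb}$ and $R \leq m\k$, absorb into $-\e\k/t$. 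The upshot is
\[
(\p_t + \Delta)(H_{\a\bb}V^\a\bar V^\bb)|_{(x_0,t_0)} \geq \bigl(\eta^2 - \e\k\eta - \e\k/t - \eta'\bigr)|V|^2 \geq 0
\]
by \eqref{eq eta(t) ODE}. A routine approximation argument ($\eta \mapsto \eta + \d$ obeying a strict ODE, then $\d \to 0$) forces strict inequality, yielding the contradiction. The explicit form \eqref{eq LYH backward explicit eta} then follows by a direct verification that $\eta(t) = \frac{\e\k}{1-e^{-\e\k(T-t)}} + \sqrt{\k/t}$ satisfies \eqref{eq eta(t) ODE} and blows up at $t = T$.

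\emph{The main obstacle} is the precise derivation of $(\p_t + \Delta) H_{\a\bb}$: arranging the Hessian-squared and Ricci cross-terms so that they collapse \emph{exactly} into $\eta^2 - \e\k\eta - \e\k/t$ on a null direction, with no stray terms, is the technical heart of the argument and requires a careful use of the null identity together with both curvature bounds. The localization handling noncompactness of $M$ is a standard but notationally heavy secondary issue.
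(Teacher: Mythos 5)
Your strategy (form the tensor $H_{\a\bb}=Z_{\a\bb}=\e R_{\a\bb}-\n_\a\n_\bb\log u-\eta g_{\a\bb}$, show it stays $\leq 0$ by a null-eigenvector maximum principle propagated backward from $t=T$, and choose $\eta$ to satisfy a Riccati-type ODE) is exactly the paper's framework, and the terminal condition and explicit-$\eta$ verification at the end are the same. But there is a genuine gap in the step you yourself flag as ``the technical heart.''

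When you compute $(\p_t+\Delta_L)Z_{\a\bb}$, the evolution of the Ricci piece under the $\e$-K\"ahler-Ricci flow, namely $\p_t R_{\a\bb}=\e\Delta_L R_{\a\bb}$, produces the term $\e^2\Delta R_{\a\bb}$, and the gradient cross-terms $\n_\g v_{\a\bb}\n_{\gb}v$ rewrite as $-\n_\g Z_{\a\bb}\n_{\gb}v$ plus $\e\n_\g R_{\a\bb}\n_{\gb}v$. Neither $\e^2\Delta R_{\a\bb}$ nor the $\n R_{\a\bb}\cdot\n v$ cross-terms are ``Ricci/scalar cross-terms controlled by $R_{\a\bb}\leq\k g_{\a\bb}$ and $R\leq m\k$'': they involve second and first covariant derivatives of Ricci, which have no pointwise bound under the stated hypotheses. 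Your claim that they ``absorb into $-\e\k/t$'' does not work. The paper's proof handles precisely these terms by grouping them, together with the bisectional term $R_{\a\gb\d\bb}\n_\g v\n_{\db}v$ and an artificially inserted $\tfrac{\e}{t}R_{\a\bb}$, into Cao's matrix Harnack quantity for the K\"ahler-Ricci flow (\cite{Cao92}, applied with $X_\a=\tfrac{1}{\e}\n_\a v$), which is $\geq 0$ on a K\"ahler-Ricci flow with bounded nonnegative bisectional curvature. The price for inserting $\tfrac{\e}{t}R_{\a\bb}$ is the $-\e\k/t$ in the ODE for $\eta$; that is the actual origin of that term, not absorption of Ricci cross-terms. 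Without invoking Cao's Harnack inequality (or re-proving it), your sketch cannot close, and no amount of pointwise curvature bounds will control $\Delta R_{\a\bb}$.

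A secondary but real issue: the ``null-direction identity $f_{\a\bb}V^\a\bar V^\bb = \eta - \e R_{\a\bb}V^\a\bar V^\bb$'' only extracts the Riccati piece at the contradiction point for that single direction; to verify the null-eigenvector condition for Hamilton's tensor maximum principle one needs the full algebraic decomposition of $v_{\a\gb}v_{\g\bb}-\e^2 R_{\a\gb}R_{\g\bb}+2\e\eta R_{\a\bb}-\eta^2 g_{\a\bb}$ as a symmetrized product $-\tfrac12 Z_{\a\gb}(\e R_{\g\bb}+v_{\g\bb}-\eta g_{\g\bb})-\tfrac12(\e R_{\a\gb}+v_{\a\gb}-\eta g_{\a\gb})Z_{\g\bb}$, which the paper carries out explicitly. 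This is the rigorous form of what you are gesturing at with the null identity.
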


\begin{remark}
In the above two theorems, it suffices to assume $g(0)$ has bounded nonnegative bisectional curvature since this condition is preserved by the $\e$-K\"ahler-Ricci flow. In the compact case, this was proved by Bando \cite{Bando84} for $m=3$ and by Mok \cite{Mok88} for all higher dimensions. The complete noncompact case is due to Shi \cite{Shi90thesis}. 
\end{remark}

Both Chow and Ni's proof of \eqref{eq Ni matrix estimate} and our proof of \eqref{eq matrix LYH backward} make use of Cao's Harnack inequality for the K\"ahler-Ricci flow \cite{Cao92}. On ancient $\e$-K\"ahler-Ricci flows, we have an improved Harnack inequality that leads to the following cleaner estimate. 

\begin{theorem}\label{thm LYH ancient}
Let $(M^m,g(t))$, $t\in (-\infty,T]$, be a complete ancient solution to the $\epsilon$-K\"ahler-Ricci flow \eqref{eq eKRF} with nonnegative bisectional curvature and $R_{\abb} \leq \k g_{\abb}$ for some $\k>0$.
Let $u:M^m \times (-\infty,T] \to \R$ be a positive solution to the backward conjugate heat equation \eqref{eq backward conjugate heat equation}. 
Then
\begin{equation}\label{eq LYH ancient}
\e R_{\a \bb} -\n_\a\n_\bb \log u -\frac{\e \k}{1-e^{-\e \k (T-t)}} \leq 0
\end{equation}
for all $(x,t) \in M \times (-\infty,T)$. 
\end{theorem}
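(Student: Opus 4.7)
My plan is to adapt the proof of Theorem \ref{thm LYH backward} to the ancient setting, extracting a cleaner barrier from the fact that Cao's Harnack inequality simplifies on ancient $\e$-K\"ahler-Ricci flows. I would work with the Hermitian tensor
\[
Q_{\a\bb} := \e R_{\a\bb} - \n_\a\n_\bb \log u - \eta(t)\, g_{\a\bb},
\]
and try to establish $Q_{\a\bb} \leq 0$ for a suitable positive function $\eta$. The key intermediate step, exactly as in the proof of Theorem \ref{thm LYH backward}, is to derive an evolution inequality of the schematic form
\[
(\p_t + \Delta) Q_{\a\bb} \leq (\text{drift term in } Q) + (\text{zeroth order in } Q),
\]
after which a tensor maximum principle would force $Q_{\a\bb} \leq 0$ provided $\eta$ satisfies the appropriate ODE and blows up at $t=T$.

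The advantage of the ancient setting is that Cao's matrix Harnack inequality for the scalar curvature, which on a finite-time flow contributes an extra $\e\k/t$ term in the ODE for the barrier, loses that singular term once the base time is shifted to $-\infty$: in the standard derivation, the offending term has the form $R/(\e(t-t_0))$, which vanishes as $t_0 \to -\infty$. Substituting this improved Harnack into the evolution equation for $Q_{\a\bb}$ should reduce the differential inequality \eqref{eq eta(t) ODE} to
\[
\eta' \leq \eta^2 - \e\k\,\eta.
\]
A direct integration (via the substitution $v = 1/\eta$) shows that the minimal positive solution that blows up as $t \to T^-$ is $\eta(t) = \dfrac{\e\k}{1 - e^{-\e\k(T-t)}}$, which saturates the ODE. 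Feeding this choice of $\eta$ back into $Q_{\a\bb} \leq 0$ yields precisely \eqref{eq LYH ancient}.

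The principal obstacle is twofold. First, the tensor maximum principle must be justified on a complete noncompact manifold over the infinite time interval $(-\infty, T)$; I expect to combine a spatial cutoff argument with the blow-up of $\eta$ at $t = T$ to absorb boundary contributions, while leveraging nonnegative bisectional curvature and the uniform upper Ricci bound to obtain estimates that remain stable as $t \to -\infty$. Second, and more delicately, one must verify Cao's improved matrix Harnack in exactly the tensor form needed to cancel the troublesome terms in the evolution of $Q_{\a\bb}$; this is the step where the ancient assumption genuinely enters, and it is what makes the conclusion \eqref{eq LYH ancient} strictly cleaner than \eqref{eq LYH backward explicit eta}.
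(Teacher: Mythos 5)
Your proposal matches the paper's argument exactly: set up $Z_{\a\bb}=\e R_{\a\bb}-\n_\a\n_\bb\log u-\eta g_{\a\bb}$ as in Theorem~\ref{thm LYH backward}, observe that Cao's matrix Harnack inequality on an ancient $\e$-K\"ahler-Ricci flow loses the $\tfrac{1}{\e t}R_{\a\bb}$ term (by translating the initial time to $-\infty$), so the barrier ODE reduces to $\eta'=\eta^2-\e\k\eta$, whose solution blowing up at $t=T$ is $\eta(t)=\tfrac{\e\k}{1-e^{-\e\k(T-t)}}$. The paper presents this as a one-paragraph remark deferring all tensor-maximum-principle details to the proof of Theorem~\ref{thm LYH backward}, so your proposal is correct and follows the same route.
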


\begin{remark}
In Theorem \ref{thm LYH ancient}, it suffices to assume the weaker condition of nonnegative orthogonal bisectional curvature. Indeed, any ancient solution to the $\e$-K\"ahler-Ricci flow with nonnegative orthogonal bisectional curvature has nonnegative bisectional curvature, as shown in \cite{LN20}. 
\end{remark}

\begin{remark}
Note that nonnegative bisectional curvature is weaker than both nonnegative sectional curvature and nonnegative complex sectional curvature, which are needed to prove the analogous results under the Ricci flow in \cite{LZ23}.
\end{remark}

One of the applications of matrix Li-Yau-Hamilton estimates is to prove the monotonicity of frequency functions. Ni \cite{Ni15} used \eqref{eq Cao-Ni matrix estimate} to prove a monotonicity formula on K\"ahler manifolds with bounded nonnegative bisectional curvature. This was generalized to the K\"ahler-Ricci flow case by Xu and the second author \cite{LX22} using \eqref{eq Ni matrix estimate}. Our estimate \eqref{eq matrix LYH backward} in Theorem \ref{thm LYH backward} also yields a similar monotonicity formula.

\begin{theorem}\label{thm monotonicity}
Let $(M^m,g(t))$, $t\in [0,T]$, be a complete solution to the $\epsilon$-K\"ahler-Ricci flow \eqref{eq eKRF} with nonnegative bisectional curvature and $R_{\abb} \leq \k g_{\abb}$ for some $\k>0$.
Let $f$ be a holomorphic function on $M^m$ of finite order and let $H(x,t):=H(x,t;y,T)$ be the fundamental solution to the backward conjugate heat equation \eqref{eq backward conjugate heat equation} centered at $(y,T)$. Suppose that $\eta:(0,T) \to (0,\infty)$ is a $C^1$ function satisfying \eqref{eq eta(t) ODE}
on $(0,T)$ and that $\eta(t) \to \infty$ as $t\to T$. Then for any $p>0$, the function
\begin{equation*}
e^{\int \eta(t) dt} \cdot \frac{\int_{M} H(x,t)|\nabla f|^{2}(x)|f|^{p-2}(x)d\mu_t}{\int_{M} H(x,t)|f|^{p}(x)d\mu_t}
\end{equation*}
is monotone nonincreasing in $t$ on $[0,T]$.
\end{theorem}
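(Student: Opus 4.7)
The plan is to recast the claimed monotonicity of $e^{\int \eta\,dt}F(t)$ as the logarithmic-derivative inequality
\begin{equation*}
\frac{d}{dt}\log F(t) + \eta(t) \;\leq\; 0,
\end{equation*}
where $F(t) := D(t)/I(t)$, with $I(t) := \int_M H(x,t)|f|^p(x)\,d\mu_t$ and $D(t) := \int_M H(x,t)|\nabla f|^2(x)|f|^{p-2}(x)\,d\mu_t$, and to prove the latter by analyzing $I'$ and $D'$ separately.

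For $I'(t)$, holomorphicity of $f$ gives the pointwise Kähler identity $\nabla_\alpha\nabla_{\bar\beta}|f|^p = \tfrac{p^2}{4}|f|^{p-2}\nabla_\alpha f\,\nabla_{\bar\beta}\bar f$, hence $\Delta|f|^p = \tfrac{p^2}{4}|f|^{p-2}|\nabla f|^2$. Combining with $H_t + \Delta H = \epsilon R H$ and the volume evolution $\partial_t d\mu_t = -\epsilon R\,d\mu_t$, the $\epsilon R H|f|^p$ contributions cancel, and one integration by parts yields $I'(t) = -\tfrac{p^2}{4}D(t)$.

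For $D'(t)$, I would set $\psi := |f|^{p-2}|\nabla f|^2_{g(t)}$ and $v_{\alpha\bar\beta} := |f|^{p-2}\nabla_\alpha f\,\nabla_{\bar\beta}\bar f \geq 0$, so that $\psi = g^{\alpha\bar\beta}v_{\alpha\bar\beta}$. The same bookkeeping gives $D'(t) = \int_M H(\psi_t - \Delta\psi)\,d\mu_t$. Since $\partial_t g^{\alpha\bar\beta} = \epsilon R^{\alpha\bar\beta}$ and $v$ is time-independent, $\psi_t = \epsilon R^{\alpha\bar\beta}v_{\alpha\bar\beta}$. Expanding $\Delta\psi$ via the K\"ahler commutation formulas (using $\nabla_{\bar\delta}\nabla_\alpha f = 0$) and integrating by parts to move derivatives from $\psi$ onto $H$ rewrites the Ricci coupling as a term involving $\nabla_\alpha\nabla_{\bar\beta}\log H$ contracted against $v$. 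At this point I invoke Theorem \ref{thm LYH backward} applied to $u = H$, which supplies the pointwise bound
\begin{equation*}
\epsilon R_{\alpha\bar\beta} - \nabla_\alpha\nabla_{\bar\beta}\log H \;\leq\; \eta\,g_{\alpha\bar\beta};
\end{equation*}
pairing this inequality with the positive semidefinite tensor $v$ and noting $g^{\alpha\bar\beta}v_{\alpha\bar\beta} = \psi$ bounds the net curvature/Hessian coupling in $D'(t)$ by $\eta(t)D(t)$. The residual Bochner-type leftover involving $|f|^{p-2}|\nabla\nabla f|^2$ combines, via a Cauchy--Schwarz step, with the ``Gaussian'' piece $\int H\,v^{\alpha\bar\beta}\nabla_\alpha\log H\,\nabla_{\bar\beta}\log H\,d\mu_t$ to produce the additional $-\tfrac{p^2}{4}D(t)^2/I(t)$ contribution. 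Putting everything together with the identity $I'(t) = -\tfrac{p^2}{4}D(t)$ gives the target $(\log F)'(t) + \eta(t) \leq 0$.

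The principal obstacle is analytic: on the complete noncompact $(M^m,g(t))$ one must justify the repeated integrations by parts (so that all boundary contributions at infinity vanish) and verify absolute convergence of every integral appearing. This is handled by combining Gaussian-type upper bounds on the backward conjugate heat kernel $H$ and its first and second derivatives under a K\"ahler-Ricci flow with nonnegative bisectional curvature, against the polynomial growth of $|f|$ and $|\nabla f|$ guaranteed by the finite-order hypothesis on $f$, adapting the cutoff arguments of Ni \cite{Ni15} and Liu--Xu \cite{LX22}.
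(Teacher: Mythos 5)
Your overall architecture is the same as the paper's: you set $I(t)=\int_M H|f|^p d\mu_t$ and $D(t)=\int_M H|\nabla f|^2|f|^{p-2}d\mu_t$ (these are $Z_p$ and $\tfrac{4}{p}D_p$ in the paper's notation), compute $I'(t)=-\tfrac{p^2}{4}D(t)$, bound $D'(t)$ by feeding the backward LYH estimate for $u=H$, and close the argument with a Cauchy--Schwarz step. Your identity $I'(t)=-\tfrac{p^2}{4}D(t)$ is correct and matches the paper's $Z_p'=-pD_p$. However, there are three concrete problems with your treatment of $D'(t)$ and the closure.

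First, there is an internal sign inconsistency. You recast the monotonicity as $(\log F)'+\eta\le 0$. But the bound you claim, $D'\le \eta D-\tfrac{p^2}{4}D^2/I$, together with $I'/I=-\tfrac{p^2}{4}D/I$, yields $(\log F)'=D'/D-I'/I\le \eta$, i.e.\ $(\log F)'-\eta\le 0$. This is exactly the inequality the paper derives, namely $\tfrac{d}{dt}\tfrac{D_p}{Z_p}\le \eta\,\tfrac{D_p}{Z_p}$. So the factor $e^{\int\eta\,dt}$ in the statement has to be read as $e^{-\int_0^t\eta\,ds}$ (equivalently $e^{\int_t^T\eta\,ds}$), not as $e^{+\int_0^t\eta\,ds}$ as your recasting implicitly assumes. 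Either your stated target or your claimed bound on $D'$ is off by a sign; as written they contradict each other.

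Second, your account of $D'(t)$ does not match the actual structure. The paper never produces a $|\nabla\nabla f|^2$ (Bochner) term. Starting from $\tfrac{4}{p}D_p'=-\int \Delta H\,|\nabla f|^2|f|^{p-2}d\mu_t+\epsilon\int H R_{\a\bb}f_\a\bar f_{\bb}|f|^{p-2}d\mu_t$, two integrations by parts---moving both derivatives onto $H$ and exploiting $\nabla_{\bar\beta}f_\beta=0$---give cleanly
\[
\tfrac{4}{p}D_p'=-\int_M H_{\a\bb}f_\b\bar f_{\ab}|f|^{p-2}d\mu_t+\epsilon\int_M H R_{\a\bb}f_\a\bar f_{\bb}|f|^{p-2}d\mu_t,
\]
with no residual $|f_{\a\b}|^2$ term. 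The appearance of a ``residual Bochner-type leftover involving $|f|^{p-2}|\nabla\nabla f|^2$'' in your sketch signals an unfinished or incorrect integration by parts, and the subsequent Cauchy--Schwarz you propose between this leftover and the ``Gaussian'' piece is not a mechanism that produces the $D^2/I$ correction.

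Third, and relatedly, the Cauchy--Schwarz the paper actually uses is different. From the computation of $Z_p'$ one extracts the identity $D_p=-\tfrac12\int\langle\nabla H,\nabla|f|^2\rangle|f|^{p-2}d\mu_t$; squaring this and applying H\"older against $Z_p$ gives $pD_p^2\le \tfrac{p}{4}Z_p\int |\langle\nabla H,\nabla\bar f\rangle|^2 H^{-1}|f|^{p-2}d\mu_t$, and this is precisely what absorbs the Gaussian piece $-\tfrac{p}{4}Z_p\int H_\a H_{\bb}H^{-1}f_\b\bar f_{\ab}|f|^{p-2}d\mu_t$ coming from expanding $H_{\a\bb}=H\nabla_\a\nabla_{\bb}\log H+H_\a H_{\bb}/H$ in the LYH estimate. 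Your proposal omits this key identity $D_p=-\tfrac12\int\langle\nabla H,\nabla|f|^2\rangle|f|^{p-2}$, which is the actual source of the $D^2/I$ term; without it, the closure of the estimate is not justified.

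The final paragraph on integrability and integration-by-parts justification via Gaussian bounds and finite order of $f$ is reasonable and aligned with the paper's (tacit) assumptions, but the substantive gaps above need to be repaired before the argument is a proof.
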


\begin{remark}
Theorem \ref{thm monotonicity} remains valid if one replaces $H(x,t)$ with any positive solution to the backward conjugate heat equation \eqref{eq backward conjugate heat equation}, provided that the integrals are finite and all integration by parts can be justified.
\end{remark}

Finally, we note that both Theorem \ref{thm LYH heat} and its Ricci flow analog in \cite[Theorem 1.1]{LZ23} can be generalized to the constrained setting in the spirit of Chow and Hamilton \cite{CH97}. This is done in Section 6. 

The rest of this paper is organized as follows. In Section 2, we derive the evolution equations that will be used in subsequent sections. In Section 3, we prove Theorem \ref{thm LYH heat}. The proof of Theorem \ref{thm LYH backward} is given in Section 4. Section 5 presents the proof of Theorem \ref{thm monotonicity}. 
%In Section 6, we study the constrained case and prove Theorem \ref{thm constrained}. 
%\input{1Preliminaries}
\section{Evolution Equations}

Let $(M^m,g(t))$, $t\in [0,T]$, be a solution to the $\epsilon$-K\"ahler-Ricci flow \eqref{eq eKRF} and let $u:M^m \times [0,T] \to \R$ be a positive solution to the heat-type equation 
\begin{equation}\label{eq general heat}
(\p_t-\delta \Delta_{g(t)}) u =\theta Ru,
\end{equation}  
where $\d,\theta \in \R$. Note that \eqref{eq general heat} covers both the heat equation \eqref{eq heat equation} (with $\delta=1$ and $\theta=0$) and the backward conjugate heat equation \eqref{eq backward conjugate heat equation} (with $\delta=-1$ and $\theta=\e$). 

In this section, we derive the evolution equation satisfied by $\n_\a \n_{\bb}\log u$, the complex Hessian of $\log u$. For simplicity of notations, we write $\Delta :=\Delta_{g(t)}$, $v:=\log u$, $v_{\a \bb}:=\n_\a \n_{\bb} v$, $v_{\a \g}:=\n_\a \n_\g v$, and $v_{\bb \gb}:=\n_{\bb} \n_{\gb} v$.  Below, $\Delta_L$ denotes the complex Lichnerowicz Laplacian acting on $(1,1)$-tensors via 
\begin{equation*}
    \Delta_L h_{\a \bb} :=\Delta h_{\a \bb} +R_{\a \bb \d \gb} h_{\g \db} -\frac{1}{2}R_{\a \db}h_{\d \bb} -\frac{1}{2}R_{\g \bb}h_{\a \gb},
\end{equation*}

\begin{proposition}\label{prop evolution hessian}
The complex Hessian of $\log u$ satisfies 
\begin{eqnarray*}
 (\p_t-\delta \Delta_L)v_{\a \bb} 
&=& \theta \n_\a \n_\bb R  +\d \left(v_{\a \gb} v_{\g \bb} +v_{\a \g} v_{\bb \gb} +R_{\a \gb  \d \bb} \n_\g v \n_{\db} v \right) \\
&& +\d \left(\n_\g v_{\a \bb} \n_{\gb} v +\n_{\gb} v_{\a \bb} \n_{\g} v \right).
\end{eqnarray*}
\end{proposition}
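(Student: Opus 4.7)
The plan is to take the logarithm of the heat-type equation, apply the mixed Hessian $\n_\a\n_\bb$, and commute derivatives using K\"ahler curvature identities. Starting from $u=e^v$, one has $u_t = u v_t$, $\n_\a u = u v_\a$, and $\n_\a\n_\bb u = u(v_\a v_\bb + v_{\a\bb})$; substituting into \eqref{eq general heat} and dividing by $u$ gives
\begin{equation*}
v_t \;=\; \d\,\D v \;+\; \d\,|\n v|^2 \;+\; \theta\,R,
\end{equation*}
where $|\n v|^2 = g^{\g\db} v_\g v_\db$.

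Next I apply $\n_\a\n_\bb$ to both sides. Because $\n_\a\n_\bb f = \p_\a\p_\bb f$ for any function $f$ on a K\"ahler manifold and partial derivatives commute with $\p_t$, the left-hand side becomes $\p_t v_{\a\bb}$. The $\theta R$ piece gives $\theta\,\n_\a\n_\bb R$ directly. For the $\d\,\D v$ piece I will establish the K\"ahler Bochner-type identity
\begin{equation*}
\n_\a\n_\bb\D v \;=\; \D_L v_{\a\bb},
\end{equation*}
by commuting $\n_\a\n_\bb\n_\g\n_\db v$ past itself and contracting with $g^{\g\db}$. The essential inputs are $[\n_\a,\n_\g]=0$ and $[\n_\bb,\n_\db]=0$ on arbitrary tensors (valid on K\"ahler manifolds since the Chern-connection curvature is of type $(1,1)$), together with the non-trivial commutators $[\n_\a,\n_\bb]$ acting on $(1,0)$- and $(0,1)$-forms via the Riemann tensor; the residual curvature pieces should assemble into precisely the $R_{\a\bb\d\gb}h_{\g\db} - \tfrac12 R_{\a\db}h_{\d\bb} - \tfrac12 R_{\g\bb}h_{\a\gb}$ correction defining $\D_L$.

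For the $\d\,|\n v|^2$ piece I will expand $\n_\a\n_\bb(g^{\g\db} v_\g v_\db)$ by Leibniz, using the identities $\n_\bb v_\g = v_{\g\bb}$, $\n_\a v_\db = v_{\a\db}$, $\n_\a v_\g = v_{\a\g}$, and $\n_\bb v_\db = v_{\bb\db}$. The two second-order cross terms produce $v_{\a\gb}v_{\g\bb}$ and $v_{\a\g}v_{\bb\gb}$. The cross term $g^{\g\db}\n_\a v_{\g\bb}\cdot v_\db$ becomes $\n_\g v_{\a\bb}\cdot\n_\gb v$ after the K\"ahler commutation $\n_\a v_{\g\bb} = \n_\g v_{\a\bb}$ (immediate from $[\n_\a,\n_\g]=0$). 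The remaining cross term $g^{\g\db} v_\g\cdot\n_\a v_{\bb\db}$ splits, via $\n_\a v_{\bb\db} = \n_\db v_{\a\bb} + [\n_\a,\n_\db]v_\bb$, into the third-derivative contribution $\n_\gb v_{\a\bb}\cdot\n_\g v$ and the single curvature piece $R_{\a\gb\d\bb}\,\n_\g v\,\n_\db v$, after applying the K\"ahler pair-exchange symmetries of the Riemann tensor to bring the indices to the stated form. Combining all three pieces yields the claimed evolution equation.

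The main technical obstacle is verifying the Bochner identity $\n_\a\n_\bb\D v = \D_L v_{\a\bb}$: one has to carry out three iterated commutators cleanly and appeal to the K\"ahler symmetries $R_{\a\bb\g\db} = R_{\g\bb\a\db} = R_{\a\db\g\bb}$ so that the Ricci contractions produced along the way line up precisely with the Ricci correction pieces in the definition of $\D_L$. Once this identity is in hand, all other steps are routine index manipulation.
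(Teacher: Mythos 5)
Your proposal is correct and follows essentially the same route as the paper: take the logarithm, apply $\n_\a\n_\bb$, invoke $\n_\a\n_\bb\D f = \D_L\n_\a\n_\bb f$, and expand $\n_\a\n_\bb|\n v|^2$ by Leibniz while commuting covariant derivatives via the vanishing of the $(2,0)$- and $(0,2)$-type K\"ahler commutators to pick up the single curvature correction $R_{\a\gb\d\bb}\n_\g v\,\n_\db v$. The one small addition over the paper's write-up is your explicit justification that $\p_t$ commutes with $\n_\a\n_\bb$ on functions (via $\n_\a\n_\bb f = \p_\a\p_\bb f$ on a K\"ahler manifold), and the one economy you could make is to simply cite the Bochner/Lichnerowicz identity rather than rederive it, as the paper does.
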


\begin{proof}
Since $u$ is a positive solution to \eqref{eq general heat}, $v:=\log u$ satisfies
\begin{equation*}
(\p_t -\delta \Delta) v =\delta |\n v|^2 +\theta R.
\end{equation*}
Using the fact that $\p_t$ commutes with $\n_\a\n_\bb$ and the identity
$$\n_\a \n_\bb \Delta f =\Delta_L \n_\a \n_\bb f$$ 
(see for example \cite[page 70 (2.22)]{Chowbookpart1}), 
we calculate that 
\begin{eqnarray*}
(\p_t-\d \Delta_L ) v_{\a \bb}&=& \n_\a \n_\bb (\p_t -\d \Delta) v \\
&=& \theta \n_\a \n_\bb R +\d \n_\a \n_\bb |\n v|^2. 
\end{eqnarray*}
Commuting covariant derivatives produces
\begin{eqnarray*}
 \n_\a \n_\bb |\n v|^2 
&=&  \n_\a \n_\bb (\n_\g v \n_{\gb} v) \\
&=& \n_\a \left(\n_\bb \n_\g v \n_{\gb} v + \n_\g v \n_\bb \n_{\gb}v \right) \\
&=& \n_\a \n_\bb \n_\g v \n_{\gb} v  + \n_\bb \n_\g v  \n_\a  \n_{\gb} v \\
&& +\n_\a  \n_\g v \n_\bb \n_{\gb}v  + \n_\g v \n_\a \n_\bb \n_{\gb}v \\
&=& \n_\g v_{\a \bb} \n_{\gb} v + v_{\a \gb} v_{\g \bb} +v_{\a \g} v_{\bb \gb}  \\
&& + \n_\g v \left(\n_{\gb}v_{\a \bb} +R_{\a \gb  \d \bb} \n_{\db} v \right) \\
&=& v_{\a \gb} v_{\g \bb} +v_{\a \g} v_{\bb \gb} +R_{\a \gb  \d \bb} \n_\g v \n_{\db} v \\
&& + \n_\g v_{\a \bb} \n_{\gb} v +\n_{\gb} v_{\a \bb} \n_{\g} v. 
\end{eqnarray*}
Hence, we obtain
\begin{eqnarray*}
 (\p_t-\delta \Delta_L)v_{\a \bb} 
&=& \theta \n_\a \n_\bb R  +\d \left(v_{\a \gb} v_{\g \bb} +v_{\a \g} v_{\bb \gb} +R_{\a \gb  \d \bb} \n_\g v \n_{\db} v\right) \\
&& +\d \left(\n_\g v_{\a \bb} \n_{\gb} v +\n_{\gb} v_{\a \bb} \n_{\g} v \right).
\end{eqnarray*}
The proof is complete.
\end{proof}

%\section{Proof of Theorem \ref{thm LYH heat}}
\section{Matrix estimates for the heat equation}
In this section, we prove Theorem \ref{thm LYH heat}.

\begin{proof}[Proof of Theorem \ref{thm LYH heat}]
Setting $\d=1$ and $\theta=0$ in Proposition \ref{prop evolution hessian} yields
\begin{eqnarray*}
(\p_t-\Delta_L)v_{\a \bb} 
&=& v_{\a \gb} v_{\g \bb} +v_{\a \g} v_{\bb \gb} +R_{\a \gb  \d \bb} \n_\g v \n_{\db} v \\
&& +\n_\g v_{\a \bb} \n_{\gb} v +\n_{\gb} v_{\a \bb} \n_{\g} v .
\end{eqnarray*}
Define
\begin{equation*}\label{eq S_abb def}
S_{\a \bb} :=v_{\a \bb} +c(t)g_{\a \bb},
\end{equation*}
where $$c(t)=\frac{\epsilon \k}{1-e^{-\epsilon \k t}}.$$
Using the identity
\begin{equation*}
v_{\a \gb} v_{\g \bb} =S_{\a \gb}S_{\g \bb} -2c(t)S_{\a \bb} +c^2(t)g_{\a \bb},
\end{equation*}
we compute that
\begin{eqnarray*}
(\p_t-\Delta_L)S_{\a \bb} &=& v_{\a \g} v_{\bb \gb} +S_{\a \gb}S_{\g \bb} -2c(t)S_{\a \bb} +R_{\a \gb  \d \bb} \n_\g v \n_{\db} v \\
&& +\n_\g v_{\a \bb} \n_{\gb} v +\n_{\gb} v_{\a \bb} \n_{\g} v \\
&& +c'(t)g_{\a \bb} +c^2(t)g_{\a \bb} -\e c(t) R_{\a \bb} 
%&\geq &  +\n_\g v_{\a \bb} \n_{\gb} v +\n_{\gb} v_{\a \bb} \n_{\g} v +S_{\a \gb}S_{\g \bb} -2c(t)S_{\a \bb}.
\end{eqnarray*}
Using $R_{\a \gb  \d \bb} \n_\g v \n_{\db} v \geq 0$, $R_{\a \bb} \leq \k g_{\a \bb}$, and the equation $$c'(t)=-c^2(t)+\ve \k c(t),$$ 
we obtain that
\begin{eqnarray*}
(\p_t-\Delta_L)S_{\a \bb} &\geq& S_{\a \gb}S_{\g \bb} -2c(t)S_{\a \bb} +\n_\g S_{\a \bb} \n_{\gb} v+\n_{\gb} S_{\a \bb} \n_{\g} v. 
\end{eqnarray*}
Noticing that the two terms $S_{\a \gb}S_{\g \bb}$ and $-2c(t)S_{\a \bb}$ satisfy the null eigenvector condition in Hamilton's tensor maximum principle \cite{Hamilton86} and $S_{\a \bb}(x,t) \to \infty$ uniformly as $t \to 0$, we conclude that if $M^m$ is compact, then $S_{\a \bb}(x,t) \geq 0$ on $M\times (0,T)$.

If $M^m$ is complete noncompact, one can use the same idea as in \cite{Ni07} to justify that the tensor maximum principle in \cite{Ni04} is applicable and conclude the nonnegativity of $S_{\a \bb}$ on $M\times (0,T)$. Alternatively, one can follow the argument in \cite[Section 3]{LZ23} to work with the smallest eigenvalue of $S_{\abb}$ and obtain the conclusion. 

\end{proof}

%\begin{proof}[Proof of Proposition \ref{prop trace LYH}]
%Let $S_{\abb}$ be the same as in the proof of Theorem \ref{thm LYH heat}. By tracing the evolution equation satisfied by $S_{\abb}$, we get that $S:=g^{\abb}S_{\abb}$ satisfies
%\begin{eqnarray*}
%(\p_t-\Delta)S &=& |v_{\a \g}|^2 +S_{\a \gb}S_{\g \ab} -2c(t)S +R_{\gb  \d} \n_\g v \n_{\db} v \\
%&& +\n_\g S \n_{\gb} v +\n_{\gb} S \n_{\g} v +mc'(t) +mc^2(t) -\ve c(t) R.
%\end{eqnarray*}
%In view of $R_{\gb  \d} \n_\g v \n_{\db} v \geq 0$, $R \leq m \k$, and the equation $c'(t)=-c^2(t)+\ve \k c(t)$, we obtain that
%\begin{equation*}
%(\p_t-\Delta)S \geq \n_\g S \n_{\gb} v+\n_{\gb} S \n_{\g} v  +S_{\a \gb}S_{\g \ab} -2c(t)S.
%\end{equation*}
%If $M^m$ is closed, then the maximum principle implies that $S\geq 0$. If $M^m$ is complete noncompact, we note that the favorable term $S_{\a \gb}S_{\g \ab} \geq \frac{S^2}{m}$ allows the use of standard cut-off technique as in \cite{LY86} to prove $S\geq 0$.
%\end{proof}

%\input{7Negative}
\section{Matrix estimates for the backward conjugate heat equation}
%\section{Proof of Theorem \ref{thm matrix Harnack backward conjugate heat equation}}
In this section, we prove Theorem \ref{thm LYH backward} and Theorem \ref{thm LYH ancient}.

\begin{proof}[Proof of Theorem \ref{thm LYH backward}]
By Proposition \ref{prop evolution hessian} with $\d=-1$ and $\theta=\e$, we have
\begin{eqnarray*}
- (\p_t+\Delta_L)v_{\a \bb} 
&=& -\e \n_\a \n_\bb R  + v_{\a \gb} v_{\g \bb} +v_{\a \g} v_{\bb \gb} +R_{\a \gb  \d \bb} \n_\g v \n_{\db} v \\
&& +\n_\g v_{\a \bb} \n_{\gb} v +\n_{\gb} v_{\a \bb} \n_{\g} v .
\end{eqnarray*}
Under the $\e$-K\"ahler-Ricci flow, we have (see \cite[page 123]{Chowbookpart1})
\begin{equation*}
    \p_t R_{\a \bb} =\e \Delta_LR_{\a \bb} =\e \n_\a \n_{\bb} R,
\end{equation*}
which yields
\begin{equation*}
(\p_t+\Delta_L)(\e R_{\a \bb}) =  \e^2 \left( \Delta R_{\a \bb} +R_{\a \bb \g \db} R_{\d \gb} -R_{\a \gb} R_{\g \bb}\right)+\e \n_\a \n_\bb R.
\end{equation*}
Also, we have
\begin{equation*}
-(\p_t+\Delta_L) (\eta g_{\a \bb}) =-\eta' g_{\a \bb} +\e \eta R_{\a \bb}.
\end{equation*}
Set
\begin{equation*}
    Z_{\a \bb} :=\e R_{\a \bb} -v_{\a \bb} -\eta(t)g_{\a \bb}. 
\end{equation*}
Combing the above evolution equations together, we derive that
\begin{eqnarray*}
&& (\p_t +\Delta_L)Z_{\a \bb} \\
&=&  v_{\a \gb} v_{\g \bb} +v_{\a \g} v_{\bb \gb}+R_{\a \gb  \d \bb} \n_\g v \n_{\db} v +\n_\g v_{\a \bb} \n_{\gb} v +\n_{\gb} v_{\a \bb} \n_{\g} v \\
&& +\e^2 \left( \Delta R_{\a \bb} +R_{\a \bb \g \db} R_{\d \gb} -R_{\a \gb} R_{\g \bb}\right)  -\eta' g_{\a \bb} +\e \eta R_{\a \bb} \\
&=&  v_{\a \gb} v_{\g \bb} +v_{\a \g} v_{\bb \gb}  -\e^2 R_{\a \gb} R_{\g \bb} -\n_\g Z_{\a \bb} \n_{\gb} v -\n_\g v \n_{\gb}Z_{\a \bb} \\
&& + \e^2\left( \Delta R_{\a \bb} +R_{\a \bb \g \db} R_{\d \gb} +\tfrac{1}{\e^2}R_{\a \gb \d \bb}\n_\g v \n_{\db} v \right)  \\
&& +\e^2 \left( \tfrac{1}{\e}\n_\g R_{\a \bb} \n_{\gb} v +  \tfrac{1}{\e} \n_\g v \n_{\gb}R_{\a \bb} +\tfrac{1}{\e t}R_{\a \bb} \right)\\
&& -\eta' g_{\a \bb} +\e \eta R_{\a \bb} -\tfrac{\e}{t}R_{\a \bb} .
\end{eqnarray*}
Using the identity
\begin{eqnarray*}
&& -\tfrac{1}{2}Z_{\a \gb}(\e R_{\g \bb}+v_{\g \bb}-\eta g_{\g \bb}) -\tfrac{1}{2}(\e R_{\a \gb}+v_{\a \gb} -\eta g_{\a \gb})Z_{\g \bb} \\
&=& v_{\a \gb} v_{\g \bb}-\e^2 R_{\a \gb}R_{\g \bb} +2\e \eta R_{\a \bb} -\eta^2 g_{\a \bb},
\end{eqnarray*}
we get 
\begin{eqnarray*}
&& (\p_t + \Delta_L)Z_{\a \bb} \\
&=& v_{\a \g} v_{\bb \gb} + \left(\eta^2-\eta' \right) g_{\a \bb} -\left( \e \eta  +\tfrac{\e}{t} \right) R_{\a \bb} \\
&& -\n_\g Z_{\a \bb} \n_{\gb} v -\n_\g v \n_{\gb}Z_{\a \bb} \\
&&  + \e^2\left( \Delta R_{\a \bb} +R_{\a \bb \g \db} R_{\d \gb} +\tfrac{1}{\e^2}R_{\a \gb \d \bb}\n_\g v \n_{\db} v \right)  \\
&& +\e^2 \left( \tfrac{1}{\e}\n_\g R_{\a \bb} \n_{\gb} v +  \tfrac{1}{\e} \n_\g v \n_{\gb}R_{\a \bb} +\tfrac{1}{\e t}R_{\a \bb} \right)\\
&& -\tfrac{1}{2}Z_{\a \gb}(\e R_{\g \bb}+v_{\g \bb}-\eta g_{\g \bb}) -\tfrac{1}{2}(\e R_{\a \gb}+v_{\a \gb} -\eta g_{\a \gb})Z_{\g \bb}.
\end{eqnarray*}
Now for the $\e$-K\"ahler-Ricci flow, Cao's matrix Harnack estimate \cite{Cao92} with $X_\a =\frac{1}{\e} \n_\a v$ implies that 
\begin{eqnarray*}
   0 &\leq & \Delta R_{\a \bb}+R_{\a \bb \g \db} R_{\d \gb} +\frac{1}{\e^2}R_{\a \gb \d \bb}\n_\g v \n_{\db} v  \\
   && +\frac{1}{\e}\n_\g v \n_{\gb}R_{\a \bb} +\frac{1}{\e}\n_{\gb}v \n_\g R_{\a \bb}+\frac{1}{\e t}R_{\a \bb}.
\end{eqnarray*}
Using $R_{\a \bb} \leq \k g_{\a \bb}$ and \eqref{eq eta(t) ODE}, we have 
\begin{equation*}
 \left(\eta^2-\eta' \right) g_{\a \bb} -\left( \e \eta  +\tfrac{\e}{t} \right) R_{\a \bb}
\geq (\eta^2-\eta' -\e \k \eta -\tfrac{\e \k}{t})g_{\a \bb}
   \geq 0. 
\end{equation*}
Therefore, we have
\begin{eqnarray*}
&&  (\p_t + \Delta_L)Z_{\a \bb} +\n_\g v \n_{\gb} Z_{\a \bb} +\n_{\gb} v \n_\g Z_{\a \bb} \\
& \geq &   -\tfrac{1}{2}Z_{\a \gb}(\e R_{\g \bb}+v_{\g \bb}-\eta g_{\g \bb})  -\tfrac{1}{2}(\e R_{\a \gb}+v_{\a \gb} -\eta g_{\a \gb})Z_{\g \bb}. 
\end{eqnarray*}
Observing that the right-hand side of the above inequality satisfies the null eigenvector condition in Hamilton's tensor maximum principle \cite{Hamilton86} and $Z_{\a \bb}(x,t) \to -\infty$ uniformly as $t \to T$, we conclude that $Z_{\a \bb}(x,t) \leq 0$ on $M\times (0,T)$ if $M^m$ is compact. 
If $M^m$ is complete noncompact, one can use the same idea as in \cite{Ni07} to justify that the tensor maximum principle in \cite{Ni04} is applicable and conclude the nonpositivity of $Z_{\a \bb}$ on $M\times (0,T)$. 

Finally, \eqref{eq LYH backward explicit eta} follows from the observation that the function 
\begin{equation*}
    \eta(t)=\frac{\e \k}{1-e^{-\e \k (T-t)}}+\sqrt{\frac{\k}{t}}
\end{equation*}
satisfies \eqref{eq eta(t) ODE} and $\eta(t) \to \infty$ as $t\to T$. 
\end{proof}

\begin{proof}[Proof of Theorem \ref{thm LYH ancient}]
This is a slight modification of the proof of Theorem \ref{thm LYH backward}. The difference is that on ancient solutions we get an improved Harnack inequality without the term $\frac{1}{\e t}R_{\abb}$.
As a result, we can get rid of the term $\frac{\e \k}{t}$ in \eqref{eq eta(t) ODE} and solve the ODE $\eta'=\eta^2-\e \k \eta$ with $\eta(t)\to \infty$ as $t\to T$ to get 
\begin{equation*}
    \eta(t)=\frac{\e \k}{1-e^{-\e \k (T-t)}}.
\end{equation*}
\end{proof}
\section{A monotonicity formula}

In this section, we prove the monotonicity formula stated in Theorem \ref{thm monotonicity}. 

\begin{proof}[Proof of Theorem \ref{thm monotonicity}]

Recall that $f$ is a holomorphic function on $M^m$ and $H(x,t):=H(x,t;y,T)$ is the fundamental solution to the backward conjugate heat equation \eqref{eq backward conjugate heat equation} centered at $(y,T)$. Following \cite{Ni15}, we define
\begin{eqnarray*}
Z_{p}(t) &=& \int_{M} H(x,t)|f|^{p}(x)d\mu_t,\\
D_{p}(t) &=& \frac{p}{4}\int_{M} H(x,t)|\nabla f|^{2}(x)|f|^{p-2}(x)d\mu_t.
\end{eqnarray*}
These integrals are finite if we assume that $f$ is of finite order in the sense of Hadamard (see \cite{Ni04}). On K\"ahler manifold, we use the convention that under a normal coordinate, 
$$\Delta =\frac{1}{2}\left(\n_\a \n_{\ab} +\n_{\ab} \n_\a \right)$$ and 
$$\langle \n F,\n G \rangle =\frac{1}{2}\left(\n_\a F \n_{\ab} G+\n_{\ab} F \n_\a G \right).$$ 

Under the $\e$-K\"{a}hler-Ricci flow \eqref{eq eKRF}, the measure $d\mu_t$ with respect to $g(t)$ evolves by $\partial_{t} d\mu_t=-\e Rd\mu$. One calculates
\begin{eqnarray*}
Z_p'(t)  &=& \int_M {(-\Delta H + \e RH){{|f|}^p}} d\mu  - \int_M H{{|f|}^p} (\e R) d\mu_t   \\
    &=& -\int_M {\Delta H{{| f |}^p}} d\mu_t   \\
    &=&  \frac{p}{2}\int_M {\langle {\nabla H,\nabla {{|f|}^2}} \rangle {{|f|}^{p - 2}}} d\mu_t \\
    &=&-\frac{p}{2}\int_M \left( H|\nabla f|^{2}|f|^{p-2}+\left(\frac{p}{2}-1\right)Hf_\a \bar{f}_{\ab}|f|^{p-2}\right)d\mu_t \\
    &=&-pD_{p}(t).
\end{eqnarray*}
A straightforward computation shows that
\begin{eqnarray*}
\frac{4}{p}D_{p}'(t)
&=&\frac{d }{{d t}}\int_M {H{{| {\nabla f} |}^2}{{| f |}^{p - 2}}} d\mu_t  \\
    &=& \int_M {(-\Delta H + \e RH){{| {\nabla f} |}^2}{{| f |}^{p - 2}}} d\mu_t  + \int_M {\e H{R_{\alpha\bar{\beta} }}{f_{\alpha}}{{ \bar{f} }_{\bar{\beta}  }}{{| f |}^{p - 2}}} d\mu_t  \\
    && - \int_M {\e H{{| {\nabla f} |}^2}{{| f |}^{p - 2}}} Rd\mu_t   \\
     &=& -\int_M {\Delta H{{| {\nabla f} |}^2}{{| f |}^{p - 2}}} d\mu_t  + \int_M {\e H{R_{\alpha\bar{\beta} }}{f_{\alpha}}{{\bar{f} }_{\bar{\beta} }}{{| f |}^{p - 2}}} d\mu_t   \\
     &=&   \int_M \left({{H_{\alpha}}{f_{\beta}}{{\bar{f} }_{\bar{\alpha}\bar{\beta} }}{{| f |}^{p - 2}} +\left(\frac{p}{2} - 1\right){H_{\alpha}}{{\bar{f} }_{\bar{\alpha} }}f{{| {\nabla f} |}^2}{{| f |}^{p - 4}}}\right)  d\mu_t \\
     && + \int_M {\e H{R_{\alpha\bar{\beta} }}{f_\alpha}{{\bar{f}}_{\bar{\beta} }}{{| f |}^{p - 2}}} d\mu_t   \\
    &=& -\int_M {{H_{\alpha\bar{\beta} }}{f_\beta}{{\bar{f} }_{\bar{\alpha} }}{{| f |}^{p - 2}}} d\mu_t  + \int_M {\e H{R_{\alpha\bar{\beta} }}{f_{\alpha}}{{\bar{f} }_{\bar{\beta} }}{{| f |}^{p - 2}}} d\mu_t.
\end{eqnarray*}
By Theorem \ref{thm LYH backward}, we have 
\begin{equation*}
    \e R_{\abb}-\n_\a \n_{\bb} \log H -\eta(t) g_{\abb} \leq 0,
\end{equation*}
which implies that 
\begin{equation*}
\e HR_{\abb} -H_{\abb} +H_\a H_{\bb}H^{-1} -\eta(t) H g_{\abb} \leq 0.
\end{equation*}
Using the above estimate, we have 
\begin{eqnarray*}
\frac{4}{p}D_{p}'(t)  &\leq& - \int_M H_\a H_{\bb} H^{-1}f_\b \bar{f}_{\ab} |f|^{p-2}d\mu_t +\eta(t) \int_M H |\n f|^2 |f|^{p-2}  d\mu_t \\
&=& -\int_M |\langle \n H, \n \bar{f} \rangle|^2 H^{-1} |f|^{p-2}d\mu_t +\frac{4}{p}\eta(t) D_p(t).
\end{eqnarray*}
Using $$D_p(t)=-\frac{1}{2}\int_M \langle \n H, \n |f|^2 \rangle |f|^{p-2}d\mu_t,$$
we then obtain
\begin{eqnarray*}
&& D_p'(t)Z_p(t) -Z_p'(t)D_p(t) \\
&\leq & -\frac{p}{4}\left(\int_M H|f|^p d\mu_t \right) \left(\int_M |\langle \n H, \n \bar{f} \rangle|^2 H^{-1} |f|^{p-2}d\mu_t \right) \\
&&+\eta(t) Z_p(t)D_p(t) -\frac{p}{4} \left(\int_M \langle \n H, \n |f|^2 \rangle  |f|^{p-2}d\mu_t \right)^2 \\
&\leq & \eta(t) Z_p(t)D_p(t),
\end{eqnarray*}
where we have used H\"older inequality in the last step. 
It follows that
\begin{equation*}
\frac{d}{dt}\frac{D_p(t)}{Z_p(t)} \leq \eta(t) \frac{D_p(t)}{Z_p(t)},
\end{equation*}
which implies
\begin{equation*}
\frac{d}{dt} \left( e^{\int \eta(t) dt} \cdot \frac{D_p(t)}{Z_p(t)} \right)\leq 0.
\end{equation*}
This proves the desired monotonicity. 

\end{proof}

\section{The Constrained Cases}
In this section, we extend Theorem \ref{thm LYH heat} to the constrained case in the spirit of Chow and Hamilton \cite{CH97}. 

\begin{theorem}\label{thm constrained} 
Let $(M^m, g(t))$, $t\in [0,T]$, be a complete solution to the $\epsilon$-K\"{a}hler-Ricci flow \eqref{eq eKRF} with nonnegative holomorphic bisectional curvature and $R_{\abb} \leq \k g_{\abb}$ for some $\k>0$. If $u$ and $\tilde{u}$ are two solutions to the heat equation \eqref{eq heat equation} with $|\tilde{u}|<u$, then we have
$$
\nabla_{\alpha}\nabla_{\bar{\beta}}\log u+\frac{\epsilon \k}{1-e^{-\epsilon \k t}}g_{\alpha\bar{\beta}} \geq \frac{\nabla_{\alpha}h\nabla_{\bar{\beta}}h}{1-h^{2}},
$$
where $h=\frac{\tilde{u}}{u}$.
\end{theorem}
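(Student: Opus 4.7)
The plan is to prove $S_{\alpha\bar\beta}\geq 0$ via Hamilton's tensor maximum principle, in direct parallel with the proof of Theorem \ref{thm LYH heat}, where
\begin{equation*}
S_{\alpha\bar\beta} := v_{\alpha\bar\beta} + c(t)\, g_{\alpha\bar\beta} - P_{\alpha\bar\beta}, \qquad P_{\alpha\bar\beta} := \frac{\nabla_\alpha h\,\nabla_{\bar\beta} h}{1-h^2},
\end{equation*}
with $v := \log u$, $h := \tilde u/u$, and $c(t) := \epsilon\kappa/(1-e^{-\epsilon\kappa t})$ as in Theorem \ref{thm LYH heat}. The hypothesis $|\tilde u|<u$ forces $|h|<1$, so $P_{\alpha\bar\beta}$ is well defined; and because $c(t)\to +\infty$ as $t\to 0^+$ while $P_{\alpha\bar\beta}$ stays locally bounded, one obtains the uniform blow-up of $S_{\alpha\bar\beta}$ at $t=0$ needed to feed the maximum principle. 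The complete noncompact case is handled exactly as in the proof of Theorem \ref{thm LYH heat}, via the approximation scheme of Ni \cite{Ni04, Ni07} or the smallest-eigenvalue trick in \cite{LZ23}.

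The first concrete step is to derive $(\partial_t-\Delta) h = \nabla_\gamma v\, \nabla_{\bar\gamma} h + \nabla_{\bar\gamma} v\, \nabla_\gamma h$, a direct consequence of $\tilde u = hu$ and the heat equations for $u$ and $\tilde u$. Commuting covariant derivatives then yields the evolutions of $\nabla_\alpha h$, $\nabla_{\bar\beta} h$, and the scalar $1-h^2$. The Leibniz rule combines these into a formula for $(\partial_t-\Delta_L) P_{\alpha\bar\beta}$. Subtracting this from the evolution of $v_{\alpha\bar\beta}+c(t)g_{\alpha\bar\beta}$ supplied by Proposition \ref{prop evolution hessian}, the ODE $c'(t) = -c^2(t)+\epsilon\kappa c(t)$, and the Ricci upper bound $R_{\alpha\bar\beta}\leq \kappa g_{\alpha\bar\beta}$, one aims for an inequality of the form
\begin{equation*}
(\partial_t-\Delta_L)S_{\alpha\bar\beta} \geq S_{\alpha\bar\gamma}S_{\gamma\bar\beta} - 2c(t)\, S_{\alpha\bar\beta} + \nabla_\gamma v\, \nabla_{\bar\gamma} S_{\alpha\bar\beta} + \nabla_{\bar\gamma} v\, \nabla_\gamma S_{\alpha\bar\beta},
\end{equation*}
whose right-hand side satisfies the null-eigenvector condition. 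Once this is in place, the tensor maximum principle closes the argument exactly as in Section 3.

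The main technical obstacle is the algebraic verification of this inequality: one must show that the cross terms generated by the $P$-evolution conspire with the quadratic term $v_{\alpha\bar\gamma}v_{\gamma\bar\beta}$ in Proposition \ref{prop evolution hessian} to reproduce the clean product $S_{\alpha\bar\gamma}S_{\gamma\bar\beta}$. The decisive bookkeeping identities are
\begin{equation*}
v_{\alpha\bar\gamma}v_{\gamma\bar\beta} = \bigl(S_{\alpha\bar\gamma} - c\,g_{\alpha\bar\gamma} + P_{\alpha\bar\gamma}\bigr)\bigl(S_{\gamma\bar\beta} - c\,g_{\gamma\bar\beta} + P_{\gamma\bar\beta}\bigr)
\end{equation*}
together with the pointwise identity $P_{\alpha\bar\gamma}P_{\gamma\bar\beta} = \tfrac{|\nabla h|^2}{1-h^2}\, P_{\alpha\bar\beta}$, coupled with careful rearrangement of the remaining mixed terms involving $\nabla h$ and $\nabla v$ that are produced by differentiating $h_\alpha h_{\bar\beta}/(1-h^2)$ twice. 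The spare terms $v_{\alpha\gamma}v_{\bar\beta\bar\gamma}\geq 0$ and $R_{\alpha\bar\gamma\delta\bar\beta}\nabla_\gamma v\,\nabla_{\bar\delta} v\geq 0$ from Proposition \ref{prop evolution hessian} (the latter by nonnegativity of the bisectional curvature) can simply be discarded, and the analogous curvature contributions that surface when commuting derivatives in the evolution of $P_{\alpha\bar\beta}$ are sign-definite for the same reason. This combinatorial cancellation is the K\"ahler analog of the Chow--Hamilton computation \cite{CH97} and constitutes essentially all of the work.
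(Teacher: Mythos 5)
Your plan is correct and mirrors the paper's argument essentially step for step: set $v=\log u$, pair Proposition~\ref{prop evolution hessian} with the evolution of $Y_{\alpha\bar\beta}:=\nabla_\alpha h\,\nabla_{\bar\beta}h/(1-h^2)$ (your $P_{\alpha\bar\beta}$), pass to $X_{\alpha\bar\beta}=v_{\alpha\bar\beta}-Y_{\alpha\bar\beta}+c(t)g_{\alpha\bar\beta}$ (your $S_{\alpha\bar\beta}$), discard the sign-favorable curvature and Hermitian-square terms, absorb $R_{\alpha\bar\beta}\le\kappa g_{\alpha\bar\beta}$ into the ODE $c'=-c^2+\epsilon\kappa c$, and close with Hamilton's tensor maximum principle (Ni's noncompact version in the complete case). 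The only inessential difference is that the paper imports the evolution of $Y_{\alpha\bar\beta}$ directly from \cite[Lemma 3]{RYSZ15} rather than rederiving it from $(\partial_t-\Delta)h=\nabla_\gamma v\,\nabla_{\bar\gamma}h+\nabla_{\bar\gamma}v\,\nabla_\gamma h$ as you propose.
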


\begin{remark}
Taking $\tilde{u}=c u$ with $|c|<1$, then $\n h \equiv 0$ and Theorem \ref{thm constrained} recovers Theorem \ref{thm LYH heat}. 
\end{remark}

\begin{proof}[Proof of Theorem \ref{thm constrained}]
As before, we write $v=\log u$. By Proposition \ref{prop evolution hessian} with $\d=1$ and $\theta=0$, we have 
\begin{eqnarray*}
(\p_t-\Delta_L)v_{\a \bb} 
&=& v_{\a \gb} v_{\g \bb} +v_{\a \g} v_{\bb \gb} +R_{\a \gb  \d \bb} \n_\g v \n_{\db} v \\
&& +\n_\g v_{\a \bb} \n_{\gb} v +\n_{\gb} v_{\a \bb} \n_{\g} v .
\end{eqnarray*}
Denote by 
$Y_{\abb}=\frac{\nabla_{\alpha}h\nabla_{\bar{\beta}}h}{1-h^{2}}$, $Y_{\a \b}=\frac{\nabla_{\alpha}h\nabla_{\beta}h}{1-h^{2}}$, and $Y_{\ab \bb}=\frac{\nabla_{\ab}h\nabla_{\bar{\beta}}h}{1-h^{2}}$. 
According to \cite[Lemma 3]{RYSZ15}, we have 
\begin{eqnarray*}
 (\p_t-\Delta_L)Y_{\abb} 
&=& -R_{\abb \g \db}Y_{\d \gb}+\nabla_{\gamma}v \nabla_{\gb} Y_{\abb}+\nabla_{\bar{\gamma}}v\nabla_{\gamma}Y_{\abb}+2|\n h|^2 Y_{\abb} \\
&& +v_{\a \g}Y_{\bb \gb} +v_{\a \gb}Y_{\g \bb}+v_{\g \bb}Y_{\a \gb} +v_{\bb \gb}Y_{\a \g} \\
&&-\frac{1}{1-h^{2}}(h_{\a \g}+2hY_{\a \g})(h_{\bb \gb} +2h Y_{\bb \gb}) \\
&&-\frac{1}{1-h^{2}}(h_{\a \gb}+2h Y_{\a \gb})(h_{\g \bb}+2hY_{\g \bb}) \\
&\leq & -R_{\abb \g \db}Y_{\d \gb}+\nabla_{\gamma}v \nabla_{\gb} Y_{\abb}+\nabla_{\bar{\gamma}}v\nabla_{\gamma}Y_{\abb}+2|\n h|^2 Y_{\abb} \\
&& +v_{\a \g}Y_{\bb \gb} +v_{\a \gb}Y_{\g \bb}+v_{\g \bb}Y_{\a \gb} +v_{\bb \gb}Y_{\a \g},
\end{eqnarray*}
where we have used the nonpositivity of the last two lines. 

Set $$P_{\abb}=v_{\abb}-Y_{\abb}.$$ 
A straightforward computation shows that
\begin{eqnarray*}
(\p_t-\Delta_L)P_{\abb} &=& R_{\a \gb  \d \bb} \n_\g v \n_{\db} v + R_{\abb \g \db}\frac{\n_{\d}h \n_{\gb} h}{1-h^2} \\
&&+\nabla_{\gamma}v \nabla_{\gb} P_{\abb}+\nabla_{\bar{\gamma}}v\nabla_{\gamma}P_{\abb}\\
&& +P_{\a \gb}P_{\g \bb}+ (v_{\a \g}-Y_{\a \g})(v_{\ab \gb} -Y_{\ab \gb}).
\end{eqnarray*}
Since the bisectional curvatures are nonnegative, we obtain
\begin{equation*}
    (\p_t-\Delta_L)P_{\abb} \geq  P_{\a \gb}P_{\g \bb}
+\nabla_{\gamma}v \nabla_{\gb} P_{\abb}+\nabla_{\bar{\gamma}}v\nabla_{\gamma}P_{\abb}.
\end{equation*}
Next, we let 
\begin{equation*}
X_{\abb}=P_{\abb}+c(t)g_{\abb}
\end{equation*}
where $c(t)=\frac{\e \k}{1-e^{-\e \k t}}$. Using the equation $c'(t)+c^{2}(t)-\epsilon \k c(t)=0$, we obtain
\begin{eqnarray*}
    (\p_t-\Delta_L)X_{\abb} &\geq&  X_{\a \gb}X_{\g \bb}
+\nabla_{\gamma}v \nabla_{\gb} X_{\abb}+\nabla_{\bar{\gamma}}v\nabla_{\gamma}X_{\abb}-2c(t)X_{\abb}.
\end{eqnarray*}
According to the tensor maximal principle of Hamilton \cite{Hamilton86}, this completes the proof of the Theorem in the compact case. In the complete noncompact case, the tensor maximum principle in \cite{Ni04} can be applied and we get the desired conclusion.

\end{proof}

The Ricci flow analog of Theorem \ref{thm constrained} states 
\begin{theorem}\label{thm constrained RF}
Let $(M^n, g(t))$, $t\in [0,T]$, be a complete solution to the Ricci flow 
$$\p_t g=-2\Ric$$ on a manifold $M^n$ of real dimension $n$. 
Let $u, \tilde{u} :M^n \times [0,T] \to \R$ be two solutions to the heat equation \eqref{eq heat equation} satisfying $|\tilde{u}|<u$. 
Suppose that $(M^n,g(t))$ has nonnegative sectional curvature and $\Ric\leq \kappa g$ for some constant $\kappa>0$. Then for all $(x,t)\in M\times (0,T)$, 
\begin{equation*}
        \n_i \n_j 
        \log u + \frac{\kappa }{1-e^{-2\kappa t}} g_{ij} \geq \frac{\n_i h \n_j h}{1-h^2},
\end{equation*}
where $h=\frac{\tilde{u}}{u}$. 
\end{theorem}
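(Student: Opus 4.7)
The plan is to mirror the proof of Theorem \ref{thm constrained} in the real Ricci flow setting, replacing the complex Lichnerowicz Laplacian on $(1,1)$-tensors with its counterpart $\Delta_L h_{ij} = \Delta h_{ij} + 2 R_{ikjl} h_{kl} - R_{ik} h_{kj} - R_{jk} h_{ik}$ acting on symmetric $(0,2)$-tensors. First I would derive the Ricci-flow analog of Proposition \ref{prop evolution hessian} for $v_{ij} := \n_i \n_j \log u$: since $v$ satisfies $v_t = \Delta v + |\n v|^2$ and $\p_t g_{ij} = -2 R_{ij}$, a standard commutator computation (already carried out in \cite{LZ23} for the unconstrained matrix estimate) gives an identity of the form
\begin{equation*}
(\p_t - \Delta_L) v_{ij} = 2 v_{ik} v_{jk} + 2 R_{ikjl} \n_k v \, \n_l v + 2 (\n_k v)(\n_k v_{ij}).
\end{equation*}

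Next, observing that $h := \tilde u / u$ satisfies $(\p_t - \Delta) h = -2 \langle \n h, \n v \rangle$, I would adapt the K\"ahler computation of \cite[Lemma 3]{RYSZ15} to produce an upper bound for $(\p_t - \Delta_L) Y_{ij}$ with $Y_{ij} := \n_i h \, \n_j h / (1-h^2)$, in which two manifestly nonpositive quadratic terms of the shape $-(1-h^2)^{-1}(h_{ik} + 2hY_{ik})(h_{jk} + 2hY_{jk})$ can be discarded. Setting $P_{ij} := v_{ij} - Y_{ij}$ and subtracting, the mixed Hessian terms should reorganize into $2 P_{ik} P_{jk}$, the curvature contribution $2 R_{ikjl}(\n_k v \, \n_l v + Y_{kl})$ is nonnegative by the nonnegative sectional curvature hypothesis, and a transport piece survives to give
\begin{equation*}
(\p_t - \Delta_L) P_{ij} \geq 2 P_{ik} P_{jk} + 2 (\n_k v)(\n_k P_{ij}).
\end{equation*}

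Finally, I would set $X_{ij} := P_{ij} + c(t) g_{ij}$ with $c(t) = \k / (1 - e^{-2\k t})$. A direct check shows $c'(t) = 2 c(t)(\k - c(t))$; together with $\p_t g_{ij} = -2 R_{ij}$, the bound $\Ric \leq \k g$, and the expansion $P_{ik} P_{jk} = (X - c g)_{ik}(X - c g)_{jk}$, this yields
\begin{equation*}
(\p_t - \Delta_L) X_{ij} \geq 2 X_{ik} X_{jk} - 4 c(t) X_{ij} + 2(\n_k v)(\n_k X_{ij}),
\end{equation*}
whose right-hand side satisfies Hamilton's null eigenvector condition. Since $X_{ij}(x,t) \to +\infty$ uniformly as $t \to 0^+$, the tensor maximum principle \cite{Hamilton86} closes the compact case, and \cite{Ni04} (applied as in \cite{Ni07}) handles the complete noncompact case.

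The hard part will be the Riemannian version of the $Y_{ij}$ evolution: on a non-K\"ahler manifold the commutator of $\Delta_L$ with $\n_i \n_j$ carries several extra Ricci terms compared with the K\"ahler calculation, and verifying that all cross-terms between $v_{ij}$ and $Y_{ij}$ recombine into $2 P_{ik} P_{jk}$ (modulo the nonpositive discarded squares) requires careful bookkeeping of the factor of $2$ that distinguishes the real ODE $c' = 2c(\k - c)$ from its K\"ahler counterpart $c' = c(\e\k - c)$.
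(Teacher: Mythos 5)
Your plan is essentially identical to the paper's argument: obtain an evolution inequality $(\p_t - \Delta_L) P_{ij} \geq 2P_{ik}P_{jk} + 2\n_k v\,\n_k P_{ij}$ using nonnegative sectional curvature to drop the $R_{ikjl}$-terms and the nonnegativity of the square term, then set $Q_{ij}=P_{ij}+c(t)g_{ij}$ with $c(t)=\kappa/(1-e^{-2\kappa t})$ satisfying $c'+2c^2-2\kappa c=0$, absorb $\Ric\leq\kappa g$, and close with Hamilton's tensor maximum principle (or its noncompact extension). The only cosmetic difference is that the paper invokes \cite[Lemma 3.4]{CH97} directly for the $P_{ij}$ evolution, noting the extra $\n\Ric$-terms there vanish because $v_{ij}$ rather than $\n_i\n_j u$ is used, whereas you propose deriving the $v_{ij}$ and $Y_{ij}$ evolutions separately and subtracting---the same computation in two steps. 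One small slip: the transport equation for $h=\tilde u/u$ is $(\p_t-\Delta)h = +2\langle\n h,\n v\rangle$, not $-2\langle\n h,\n v\rangle$; since your final inequality carries the correct $+2\n_k v\,\n_k P_{ij}$, this is evidently a typo that does not propagate.
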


\begin{remark}
Theorem \ref{thm constrained RF} covers Theorem 1.1 in \cite{LZ23} by taking $\tilde{u}=c u$ with $|c|<1$. 
\end{remark}

\begin{proof}[Proof of Theorem \ref{thm constrained RF}]
Let $v=\log u$ and 
\begin{equation*}
    P_{ij}=\n_i \n_j v-\frac{\n_i h \n_j h}{1-h^2}. 
\end{equation*}
By a similar calculation as in \cite{CH97}, one gets 
\begin{eqnarray*}
&& (\p_t -\Delta_L) P_{ij} \\
&=&2P_{ik}P_{jk} +2R_{ikjl}\n_k v \n_l v +2R_{ikjl}\frac{\n_k h \n_l h}{1-h^2}+2\n_k v \n_kP_{ij} \\
&& +\frac{2}{1-h^2}\left(\n_i \n_k h +\frac{2h\n_i h \n_k h}{1-h^2} \right) \left(\n_j  \n_k h+\frac{2h\n_j h \n_k h}{1-h^2} \right)
\end{eqnarray*}
The only difference from \cite[Lemma 3.4]{CH97} is that we do not have the terms involving the covariant derivatives of $\Ric$. Noticing the last term is nonnegative, we use the nonnegative sectional curvature condition to conclude
\begin{equation*}
(\p_t -\Delta_L) P_{ij} \geq  2P_{ik}P_{jk} +2\n_k v \n_kP_{ij}.
\end{equation*}
Set 
\begin{equation*}
    Q_{ij}=P_{ij}+c(t)g_{ij},
\end{equation*}
where
\begin{equation*}\label{eq c(t) def}
    c(t)=\frac{\kappa }{1-e^{-2\kappa t}}.
\end{equation*}
Using the identity
\begin{equation*}
    P_{ik}P_{jk} =Q_{ik}Q_{jk}-2c(t)Q_{ij}+c^2(t)g_{ij},
\end{equation*}
we get
\begin{eqnarray*}
(\p_t -\Delta_L) Q_{ij} &\geq & 2Q_{ik}Q_{jk} -4c(t)Q_{ij} +2\n_k v \n_k Q_{ij} \\
&& +c'(t)g_{ij}+2c^2(t)g_{ij}-2c(t) R_{ij}.
\end{eqnarray*}
In view of $R_{ij}\leq \k g_{ij}$ and $c'(t)+2c^2(t)-2\k c(t)=0$, we obtain
\begin{equation*}
(\p_t -\Delta_L) Q_{ij} = 2Q_{ik}Q_{jk} -4c(t)Q_{ij} +2\n_k v \n_k Q_{ij}.
\end{equation*}
If $M^n$ is compact, we can apply Hamilton's tensor maximum principle \cite{Hamilton86} to conclude that $Q_{ij}\geq 0$ on $M\times (0,T)$. If $M^n$ is complete noncompact, we can follow the argument in \cite{LZ23} to complete the proof. 
\end{proof}

%\input{7Negative}
%\input{backup}

%\section*{Acknowledgments}

%\newpage

\bibliographystyle{alpha}
\bibliography{references}

\newcommand{\etalchar}[1]{$^{#1}$}
\begin{thebibliography}{CCG{\etalchar{+}}10}

\bibitem[Bam20]{Bamler20}
Richard Bamler.
\newblock Entropy and heat kernel bounds on a {R}icci flow background.
\newblock {\em arXiv:2008.07093v3}, 2020.

\bibitem[Ban84]{Bando84}
Shigetoshi Bando.
\newblock On the classification of three-dimensional compact {K}aehler
  manifolds of nonnegative bisectional curvature.
\newblock {\em J. Differential Geom.}, 19(2):283--297, 1984.

\bibitem[Cao92]{Cao92}
Huai~Dong Cao.
\newblock On {H}arnack's inequalities for the {K}\"{a}hler-{R}icci flow.
\newblock {\em Invent. Math.}, 109(2):247--263, 1992.

\bibitem[CCG{\etalchar{+}}07]{Chowbookpart1}
Bennett Chow, Sun-Chin Chu, David Glickenstein, Christine Guenther, James
  Isenberg, Tom Ivey, Dan Knopf, Peng Lu, Feng Luo, and Lei Ni.
\newblock {\em The {R}icci flow: techniques and applications. {P}art {I}},
  volume 135 of {\em Mathematical Surveys and Monographs}.
\newblock American Mathematical Society, Providence, RI, 2007.
\newblock Geometric aspects.

\bibitem[CCG{\etalchar{+}}08]{Chowbookpart2}
Bennett Chow, Sun-Chin Chu, David Glickenstein, Christine Guenther, James
  Isenberg, Tom Ivey, Dan Knopf, Peng Lu, Feng Luo, and Lei Ni.
\newblock {\em The {R}icci flow: techniques and applications. {P}art {II}},
  volume 144 of {\em Mathematical Surveys and Monographs}.
\newblock American Mathematical Society, Providence, RI, 2008.
\newblock Analytic aspects.

\bibitem[CCG{\etalchar{+}}10]{Chowbookpart3}
Bennett Chow, Sun-Chin Chu, David Glickenstein, Christine Guenther, James
  Isenberg, Tom Ivey, Dan Knopf, Peng Lu, Feng Luo, and Lei Ni.
\newblock {\em The {R}icci flow: techniques and applications. {P}art {III}.
  {G}eometric-analytic aspects}, volume 163 of {\em Mathematical Surveys and
  Monographs}.
\newblock American Mathematical Society, Providence, RI, 2010.

\bibitem[CFLL13]{CFLL13}
Xiaodong Cao, Benjamin Fayyazuddin~Ljungberg, and Bowei Liu.
\newblock Differential {H}arnack estimates for a nonlinear heat equation.
\newblock {\em J. Funct. Anal.}, 265(10):2312--2330, 2013.

\bibitem[CH97]{CH97}
Bennett Chow and Richard~S. Hamilton.
\newblock Constrained and linear {H}arnack inequalities for parabolic
  equations.
\newblock {\em Invent. Math.}, 129(2):213--238, 1997.

\bibitem[CH09]{CH09}
Xiaodong Cao and Richard~S. Hamilton.
\newblock Differential {H}arnack estimates for time-dependent heat equations
  with potentials.
\newblock {\em Geom. Funct. Anal.}, 19(4):989--1000, 2009.

\bibitem[Cho22]{Chow22}
Bennett Chow.
\newblock Li-{Y}au {I}nequalities in {G}eometric {A}nalysis {\it {d}edicated to
  {p}rofessor {p}eter {l}i on the occasion of his 70th {b}irthday}.
\newblock {\em J. Geom. Anal.}, 32(11):Paper No. 271, 10, 2022.

\bibitem[CN05]{CN05}
Huai-Dong Cao and Lei Ni.
\newblock Matrix {L}i-{Y}au-{H}amilton estimates for the heat equation on
  {K}\"ahler manifolds.
\newblock {\em Math. Ann.}, 331(4):795--807, 2005.

\bibitem[CZ11]{CZ11}
Xiaodong Cao and Qi~S. Zhang.
\newblock The conjugate heat equation and ancient solutions of the {R}icci
  flow.
\newblock {\em Adv. Math.}, 228(5):2891--2919, 2011.

\bibitem[Ham86]{Hamilton86}
Richard~S. Hamilton.
\newblock Four-manifolds with positive curvature operator.
\newblock {\em J. Differential Geom.}, 24(2):153--179, 1986.

\bibitem[Ham93]{Hamilton93}
Richard~S. Hamilton.
\newblock A matrix {H}arnack estimate for the heat equation.
\newblock {\em Comm. Anal. Geom.}, 1(1):113--126, 1993.

\bibitem[LN20]{LN20}
Xiaolong Li and Lei Ni.
\newblock K\'{a}hler-{R}icci shrinkers and ancient solutions with nonnegative
  orthogonal bisectional curvature.
\newblock {\em J. Math. Pures Appl. (9)}, 138:28--45, 2020.

\bibitem[LX22]{LX22}
Hao-Yue Liu and Pang Xu.
\newblock A note on parabolic frequency and a theorem of
  {H}ardy-{P}\'{o}lya-{S}zeg\"{o}.
\newblock {\em Internat. J. Math.}, 33(9):Paper No. 2250064, 15, 2022.

\bibitem[LY86]{LY86}
Peter Li and Shing-Tung Yau.
\newblock On the parabolic kernel of the {S}chr\"{o}dinger operator.
\newblock {\em Acta Math.}, 156(3-4):153--201, 1986.

\bibitem[LZ23]{LZ23}
Xiaolong Li and Qi~S. Zhang.
\newblock Matrix {L}i-{Y}au-{H}amilton estimates under {R}icci flow and
  parabolic frequency.
\newblock {\em arXiv:2306.10143}, 2023.

\bibitem[Mok88]{Mok88}
Ngaiming Mok.
\newblock The uniformization theorem for compact {K}\"{a}hler manifolds of
  nonnegative holomorphic bisectional curvature.
\newblock {\em J. Differential Geom.}, 27(2):179--214, 1988.

\bibitem[Mos64]{Moser64}
J\"{u}rgen Moser.
\newblock A {H}arnack inequality for parabolic differential equations.
\newblock {\em Comm. Pure Appl. Math.}, 17:101--134, 1964.

\bibitem[Ni04]{Ni04}
Lei Ni.
\newblock Ricci flow and nonnegativity of sectional curvature.
\newblock {\em Math. Res. Lett.}, 11(5-6):883--904, 2004.

\bibitem[Ni07]{Ni07}
Lei Ni.
\newblock A matrix {L}i-{Y}au-{H}amilton estimate for {K}\"{a}hler-{R}icci
  flow.
\newblock {\em J. Differential Geom.}, 75(2):303--358, 2007.

\bibitem[Ni08]{Ni08Survey}
Lei Ni.
\newblock Monotonicity and {L}i-{Y}au-{H}amilton inequalities.
\newblock In {\em Surveys in differential geometry. {V}ol. {XII}. {G}eometric
  flows}, volume~12 of {\em Surv. Differ. Geom.}, pages 251--301. Int. Press,
  Somerville, MA, 2008.

\bibitem[Ni15]{Ni15}
Lei Ni.
\newblock Parabolic frequency monotonicity and a theorem of
  {H}ardy-{P}\'olya-{S}zeg\"o.
\newblock In {\em Analysis, complex geometry, and mathematical physics: in
  honor of {D}uong {H}. {P}hong}, volume 644 of {\em Contemp. Math.}, pages
  203--210. Amer. Math. Soc., Providence, RI, 2015.

\bibitem[NT03a]{NT03AJM}
Lei Ni and Luen-Fai Tam.
\newblock Plurisubharmonic functions and the {K}\"{a}hler-{R}icci flow.
\newblock {\em Amer. J. Math.}, 125(3):623--654, 2003.

\bibitem[NT03b]{NT03JDG}
Lei Ni and Luen-Fai Tam.
\newblock Plurisubharmonic functions and the structure of complete {K}\"{a}hler
  manifolds with nonnegative curvature.
\newblock {\em J. Differential Geom.}, 64(3):457--524, 2003.

\bibitem[NT04]{NT04}
Lei Ni and Luen-Fai Tam.
\newblock K\"{a}hler-{R}icci flow and the {P}oincar\'{e}-{L}elong equation.
\newblock {\em Comm. Anal. Geom.}, 12(1-2):111--141, 2004.

\bibitem[Per02]{Perelman1}
Grisha Perelman.
\newblock The entropy formula for the {R}icci flow and its geometric
  applications.
\newblock {\em arXiv:math/0211159}, 2002.

\bibitem[Ren19]{Ren19}
Xin-An Ren.
\newblock Matrix {L}i-{Y}au-{H}amilton estimates for nonlinear heat equations.
\newblock {\em Comm. Anal. Geom., to appear, arXiv:1911.00505v1}, 2019.

\bibitem[RYSZ15]{RYSZ15}
Xin-An Ren, Sha Yao, Li-Ju Shen, and Guang-Ying Zhang.
\newblock Constrained matrix {L}i-{Y}au-{H}amilton estimates on {K}\"{a}hler
  manifolds.
\newblock {\em Math. Ann.}, 361(3-4):927--941, 2015.

\bibitem[Shi90]{Shi90thesis}
Wan-Xiong Shi.
\newblock {\em Ricci deformation of the metric on complete noncompact {K}ahler
  manifolds}.
\newblock ProQuest LLC, Ann Arbor, MI, 1990.
\newblock Thesis (Ph.D.)--Harvard University.

\bibitem[Wu20]{Wu20}
Jiayong Wu.
\newblock New differential {H}arnack inequalities for nonlinear heat equations.
\newblock {\em Chinese Ann. Math. Ser. B}, 41(2):267--284, 2020.

\bibitem[Zha06]{Zhang06}
Qi~S. Zhang.
\newblock Some gradient estimates for the heat equation on domains and for an
  equation by {P}erelman.
\newblock {\em Int. Math. Res. Not.}, pages Art. ID 92314, 39, 2006.

\end{thebibliography}

\end{document}